\newtheorem{theorem}{Theorem}[section]
\newtheorem{lemma}[theorem]{Lemma}
\newtheorem*{theorem*}{Theorem}
\newtheorem{remark}[theorem]{Remark}
\newtheorem{proposition}[theorem]{Proposition}
\newtheorem{definition}[theorem]{Definition}
\numberwithin{equation}{section}
\theoremstyle{remark}
\providecommand{\norm}[1]{ \lVert#1  \rVert}
\newcommand{\dx}{\, d x}
\newcommand{\dy}{\, d y}
\newcommand{\dz}{\, d z}
\newcommand{\dt}{\, d t}
\def\Xint#1{\mathchoice
   {\XXint\displaystyle\textstyle{#1}}%
   {\XXint\textstyle\scriptstyle{#1}}%
   {\XXint\scriptstyle\scriptscriptstyle{#1}}%
   {\XXint\scriptscriptstyle\scriptscriptstyle{#1}}%
   \!\int}
\def\XXint#1#2#3{{\setbox0=\hbox{$#1{#2#3}{\int}$}
     \vcenter{\hbox{$#2#3$}}\kern-.5\wd0}}
\def\dashint{\Xint-}
\DeclareMathOperator*{\esssup}{ess\,sup}
\DeclareMathOperator*{\essinf}{ess\,inf}
\newcommand{\citecomment}[2][]{\citen{#2}#1\citevar}
\newcommand{\citeone}[1]{\citecomment{#1}}
\newcommand{\citetwo}[2][]{\citecomment[,~#1]{#2}}
\newcommand{\citevar}{\@ifnextchar\bgroup{;~\citeone}{\@ifnextchar[{;~\citetwo}{]}}}
\newcommand{\citefirst}{\@ifnextchar\bgroup{\citeone}{\@ifnextchar[{\citetwo}{]}}}
\def\avgint{\Xint-}
\newcommand{\rn}{{{\mathbb R}^n}}
\newcommand{\R}{{\mathbb R}}
\begin{document}

\title{Two-weight norm inequalities for parabolic fractional maximal functions}

\author{David Cruz-Uribe, OFS}
\address{Department of Mathematics \\
University of Alabama \\
 Tuscaloosa, AL 35487, USA}
\email{dcruzuribe@ua.edu}

\author{Kim Myyryl\"ainen}
\address{Department of Mathematical Analysis, Charles University, Sokolovsk\'a 83, 186 75 Prague 8, Czech Republic}
\email{kim.myyrylainen@matfyz.cuni.cz}

\thanks{The first author is partially supported by a Simons Foundation
  Travel Support for Mathematicians Grant and by NSF Grant DMS-2349550. 
  The second author is supported by Charles University PRIMUS/24/SCI/020 and Research Centre program No. UNCE/24/SCI/005. The authors would like to thank the referees for their careful reading and helpful comments.}

\date{\today}

\subjclass[2020]{Primary: 42B25; Secondary:  42B35, 46E30, 26D15}

\keywords{two-weight norm inequalities, parabolic maximal functions, parabolic Muckenhoupt weights, one-sided weights, fractional operators}

\begin{abstract}

We prove two-weight norm inequalities for  parabolic fractional maximal functions using parabolic Muckenhoupt weights.
In particular, we prove a two-weight, weak-type estimate 
and Fefferman--Stein type inequalities for the centered parabolic maximal function.
We also prove that a parabolic Sawyer-type condition implies the strong-type estimate for the parabolic fractional maximal function.
Finally, we prove the strong-type estimate 
for the centered parabolic maximal function
assuming a stronger parabolic Muckenhoupt bump condition.

\end{abstract}

\maketitle

\section{Introduction}

In this paper we prove two-weight, weak and strong-type norm inequalities for parabolic maximal operators.  To put our results into context, we first review some prior results, beginning with those for the Hardy--Littlewood maximal operator and the fractional maximal operator.   The first results of this kind are due to Muckenhoupt~\cite{muckenhoupt1972}, who proved that given a pair of weights $(w,v)$, a necessary and sufficient condition for the Hardy--Littlewood maximal operator,
\[ Mf(x) = \sup_{Q} \avgint_Q |f(y)|\,dy \cdot \chi_Q(x),  \]
to satisfy the weak $(q,q)$ inequality, $1\leq q < \infty$,
\[ w(\{ x\in \rn : Mf(x) > \lambda \}) \leq \frac{C}{\lambda^q} \int_\rn |f(x)|^q v\dx, \]
is that $(w,v)$ satisfy the two-weight $A_q$ condition: if $q>1$,
\[ [(w,v)]_{A_q} = \sup_Q \avgint_Q w \dx \bigg( \avgint_Q v^{1-q'}\dx \bigg)^{q-1} < \infty, \]
and if $q=1$,
\[ [(w,v)]_{A_1} = \sup_Q \esssup_{x\in Q} v(x)^{-1} \avgint_Q w\dy < \infty. \]
It is well-known (though not originally proved in the literature it follows from arguments in~\cite{MR340523}--see~\cite{MR3642364}) that a similar result holds for the fractional maximal operator,
\[ M_\alpha f(x) = \sup_{Q} |Q|^{\frac{\alpha}{n}} \avgint_Q |f(y)|\,dy \cdot \chi_Q(x),  \]
where $0<\alpha<n$: for $1\leq q< \frac{n}{\alpha}$ and $\frac{1}{q}-\frac{1}{r}=\frac{\alpha}{n}$,
\[ w^r(\{ x\in \rn : M_\alpha f(x) > \lambda \}) \leq \frac{C}{\lambda^r} \bigg(\int_\rn |f|^q v^q\dx \bigg)^{\frac{r}{q}}, \]
if and only if $(w,v)$ satisfy the two-weight $A_{q,r}$ condition: if $q>1$,
\[ [(w,v)]_{A_{q,r}} = \sup_Q \avgint_Q w^r\,dx \bigg(\avgint_Q v^{-q'}\dx\bigg)^{\frac{r}{q'}} <\infty,  \]
 and if $q=1$,
\[ [(w,v)]_{A_{1,r}} = \sup_Q \esssup_{x\in Q} v(x)^{-r} \avgint_Q w^r\,dy < \infty.  \]
(Here and below, the supremum is taken over all cubes $Q$ with sides parallel to the coordinate axes.)
Previously, a special case of the $(1,1)$ inequality was proved by Fefferman and Stein~\cite{feffermanstein1971}.  They proved that when $q=1$, the weak $(1,1)$ inequality holds for pairs of weights  of the form $(w,Mw)$, where $w\in L^1_{\mathrm{loc}}(\rn)$; consequently, by Marckinkiewicz interpolation they proved that when $q>1$, the strong $(q,q)$ inequality holds for the same pairs.  

A characterization of two-weight, strong-type inequalities for fractional maximal operators was first proved by Sawyer~\cite{sawyer1982}, who introduced the so-called testing conditions.  He showed that if $0\leq \alpha <n$, $1<q\leq r < \infty$, then 
\begin{equation} \label{eqn:strong-qr}
 \bigg(\int_\rn (M_\alpha f)^r w\,dx\bigg)^{\frac{1}{r}} 
\leq 
C \bigg(\int_\rn |f|^q v\,dx\bigg)^{\frac{1}{q}} 
\end{equation}
if and only if 
\[ [(w,v)]_{S_{q,r,\alpha}} = \sup_Q \bigg(\int_Q v^{1-q'}\,dx\bigg)^{-\frac{1}{q}}
\bigg(\int_Q M_\alpha(v^{1-q'}\chi_Q)^r w\,dx\bigg)^{\frac{1}{r}} < \infty.  
\]
This condition is referred to as a testing condition since it consists of the norm inequality for the family of test functions $v^{1-q'}\chi_Q$.

Since the strong-type $(q,r)$ implies the corresponding weak $(q,r)$ inequality, we have that the $A_{q,r}$ is necessary for the strong $(q,r)$ inequality to hold.  However, examples show that it is not sufficient:  see~\cite{muckenhoupt1972,MR3642364}.  P\'erez~\cite{perez1994,perez1995} introduced a generalization of the $A_{q,r}$ condition, generally referred to as a ``bump condition."  He showed that a sufficient condition for \eqref{eqn:strong-qr} to hold is that the weights satisfy, for some $s>1$,
\[ [(w,v)]_{A_{q,r,s}} = \sup_Q |Q|^{\frac{\alpha}{n}+\frac{1}{r}-\frac{1}{q}} \bigg(\avgint_Q w\,dx\bigg)
\bigg(\avgint_Q v^{s(1-q')}\,dx\bigg)^{\frac{r}{q's}} < \infty.
\]
We note that he proved substantially stronger results, with the second norm replaced by a smaller norm in the scale of Orlicz spaces (e.g., with a so-called "log-bump").  We refer the reader to the original papers or to~\cite{MR2797562} for more information on bump conditions.

The analog of the Fefferman--Stein inequality for the fractional maximal operator was proved by Sawyer~\cite{MR670111}:  he proved that if  $0\leq \alpha <n$ and $1<q<\frac{n}{\alpha}$, then
\[ \int_\rn (M_\alpha f)^q w\,dx \leq C\int_\rn |f|^q M_{\alpha q}w\,dx.   \]

\ 

\medskip

We now consider the generalization of these results to one-sided operators on the real line.  We define the 
 one-sided fractional maximal operators on $\R$ as follows:  for $0\leq \alpha<1$, let
\[ M_\alpha^+ f(x) = \sup_{h>0} \frac{1}{h^{1-\alpha}}\int_x^{x+h} |f(y)|\,dy.
\]
For weak-type inequalities, Mart\'{\i}n-Reyes and de la Torre~\cite{MR1425871} (see also~\cite{martinpicktorre1993}) proved the following.  If $1\leq q< \frac{1}{\alpha}$ and $\frac{1}{q}-\frac{1}{r}=\alpha$,
\[ w^r(\{ x\in \R : M_\alpha f(x) > \lambda \}) \leq \frac{C}{\lambda^r} \bigg(\int_\R |f|^q v^q\dx \bigg)^{\frac{r}{q}}, \]
if and only if $(w,v)$ satisfy the two-weight $A_{q,r}^+$ condition: if $q>1$,
\[ [(w,v)]_{A_{q,r}^+} = \sup_{a<b<c} (c-a)^{\alpha-1}\bigg(\int_a^b w^r\,dx\bigg)^{\frac{1}{r}}
\bigg(\int_b^c v^{-q'}\,dx\bigg)^{\frac{1}{q'}} < \infty, 
\]
and if $q=1$,
\[  [(w,v)]_{A_{1,r}^+} = \sup_{a<b} v(b)^{-1}\bigg( (a-b)^{-1} \int_a^b w^{r}\,dx\bigg)^{\frac{1}{r}} < \infty. 
\]
For strong-type inequalities, Mart\'{\i}n-Reyes and de la Torre~\cite{martinreyestorre1993} (see also~\cite{sawyer1986,martinsalvadortorre1990, MR930071}) showed that for $0\leq \alpha <1$ and $1<q\leq r<\infty$, the strong-type inequality
 \[ \bigg(\int_\R (M_\alpha^+ f)^r w\,dx\bigg)^{\frac{1}{r}} 
\leq 
C \bigg(\int_\R |f|^q v\,dx\bigg)^{\frac{1}{q}} 
\]
holds if and only if 
\begin{equation*} 
[(w,v)]_{S_{q,r,\alpha}^+} = \sup_I \bigg(\int_{I^+} v^{1-q'}\,dx\bigg)^{-\frac{1}{q}}
\bigg(\int_I M_\alpha^+(v^{1-q'}\chi_{I^+})^r w\,dx\bigg)^{\frac{1}{r}} < \infty.  
\end{equation*}
Here the supremum is taken over all intervals $I=[a,c]$ with $I^+=[b,c]$ for all $a<b<c$.

Two-weight, Fefferman--Stein type inequalities for one-sided maximal operators were proved by de Rosa~\cite{derosa2006}, who showed that if $0\leq \alpha <1$ and $1<q<\frac{1}{\alpha}$, then
\[ \int_\R (M^+_\alpha f)^q w\,dx \leq C\int_\R |f|^q M^-_{\alpha q}w\,dx.   \]

\ 

\medskip

Since the development of the weighted theory for one-sided operators in the real line, there have been a number of attempts to extend this theory to $\rn$, $n>1$.  We refer the reader to~\cite{berra2022,ombrosi2005,ForzaniMartinreyesOmbrosi2011,MR1330244,berkovits2011} for further information.   More recently, one-sided operators have been generalized to $\rn$ in a different direction, using the parabolic geometry that arises in the study of parabolic differential equations.  The one-weight theory was introduced by Kinnunen and Saari~\cite{kinnunenSaariParabolicWeighted,kinnunenSaariMuckenhoupt} for a one-sided maximal operator; see also~\cite{KinnunenMyyry2023,KinnunenMyyryYangZhu2022,KinnunenMyyryYang2022,saari2018,saari2016}.  The one-weight theory for fractional maximal operators was introduced by Ma, He, and Yan~\cite{MaHeYan2023}.

Our main results in this paper are to extend this approach to begin to develop a two-weight theory for parabolic maximal operators.  We will state our results here, but for brevity will defer some definitions to Section~\ref{sec:twoweightMuckenhoupt}.  
In particular, we work in the parabolic geometry of parabolic rectangles $R(x,t,L,p)$ with $1\leq p<\infty$. The parameter $p$ while implicit is important in establishing underlying geometry and will be fixed from now on.

Our first result is a weak $(q,r)$ inequality for the centered parabolic maximal operator.

\begin{theorem}
\label{thm:twoweightweaktype}
Let $0\leq\gamma<1$, $1\leq q<\infty$,
and $0\leq\alpha<\frac{1}{q}$.  Define $r$ by $\frac{1}{q}-\frac{1}{r}=\alpha$, 
and let $(w,v) \in A^+_{q,r}(\gamma)$:  that is, if $q>1$,
\[
[(w,v)]_{A^+_{q,r}(\gamma)} = \sup_{R \subset \mathbb R^{n+1}} \biggl( \dashint_{R^-(\gamma)} w^r \,dx\dt \biggr)^\frac{1}{r} 
\biggl( \dashint_{R^+(\gamma)} v^{-q'} \,dx\dt  \biggr)^\frac{1}{q'} < \infty,
\]
and if $q=1$,
\[
[(w,v)]_{A^+_{1,r}(\gamma)} = \sup_{R\subset \R^{n+1}} \big(\essinf_{(x,t)\in R^+(\gamma)} v(x,t)\big)^{-1}
\biggl( \dashint_{R^-(\gamma)} w^r \,dx\dt \biggr)^\frac{1}{r} <\infty.
\]
Then there is a constant $C=C(n,p,\gamma,q,r,\alpha)$ such that
\[
w^r( \{ M^{\gamma+}_{\alpha, c} f > \lambda \}) 
\leq [(w,v)]_{A_{q,r}^+(\gamma)}^r \frac{C}{\lambda^r}
\biggl( \int_{\mathbb{R}^{n+1}} \lvert f \rvert^q \, v^q \,dx\dt \biggr)^\frac{r}{q}
\]
for 
every $f\in L^1_{\mathrm{loc}}(\mathbb{R}^{n+1})$.

\end{theorem}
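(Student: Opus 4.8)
The plan is to run the classical Muckenhoupt-type argument, adapted to parabolic rectangles. First I would fix $\lambda>0$ and note that there is nothing to prove unless $\int_{\R^{n+1}}|f|^q v^q\,dy\,ds<\infty$; by inner regularity of the measure $w^r\,dx\,dt$ it suffices to bound $w^r(K)$ for an arbitrary compact set $K\subset\{M^{\gamma+}_{\alpha,c}f>\lambda\}$ and then take the supremum over such $K$. For each $(x,t)\in K$, the definition of the centered parabolic fractional maximal operator furnishes a parabolic rectangle $R=R_{(x,t)}$ (with $(x,t)$ a distinguished point, e.g.\ its center) such that $|R|^{\alpha}\dashint_{R^+(\gamma)}|f|\,dy\,ds>\lambda$. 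Since $|R^-(\gamma)|$ and $|R^+(\gamma)|$ are fixed multiples of $|R|$ depending only on $n,p,\gamma$, this rewrites as $\int_{R^+(\gamma)}|f|\,dy\,ds\ge c\,\lambda\,|R|^{1-\alpha}$ with $c=c(n,p,\gamma)>0$.

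The heart of the argument would be the per-rectangle estimate
\[
w^r\bigl(R^-(\gamma)\bigr)\le C\,[(w,v)]_{A^+_{q,r}(\gamma)}^{\,r}\,\lambda^{-r}\Bigl(\int_{R^+(\gamma)}|f|^q v^q\,dy\,ds\Bigr)^{r/q},\qquad C=C(n,p,\gamma,q,r,\alpha).
\]
For $q>1$ I would apply H\"older's inequality with exponents $q,q'$ to $\int_{R^+(\gamma)}|f|=\int_{R^+(\gamma)}(|f|v)\,v^{-1}$, giving $c\lambda|R|^{1-\alpha}\le\bigl(\int_{R^+(\gamma)}|f|^q v^q\bigr)^{1/q}\bigl(\int_{R^+(\gamma)}v^{-q'}\bigr)^{1/q'}$, then write $w^r(R^-(\gamma))=|R^-(\gamma)|\dashint_{R^-(\gamma)}w^r$, use the $A^+_{q,r}(\gamma)$ condition in the form $\dashint_{R^-(\gamma)}w^r\le[(w,v)]_{A^+_{q,r}(\gamma)}^{\,r}\bigl(\dashint_{R^+(\gamma)}v^{-q'}\bigr)^{-r/q'}$, and substitute. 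The powers of $|R|$ that appear collect to $|R|^{\,1+r/q'-(1-\alpha)r}$, and this exponent vanishes exactly because $\tfrac1q-\tfrac1r=\alpha$; what remains is the displayed bound. (If $\int_{R^+(\gamma)}v^{-q'}=\infty$ the $A^+_{q,r}(\gamma)$ condition forces $w=0$ a.e.\ on $R^-(\gamma)$, so the estimate is trivial.) For $q=1$ the same computation works with H\"older replaced by $\int_{R^+(\gamma)}|f|\le\bigl(\essinf_{R^+(\gamma)}v\bigr)^{-1}\int_{R^+(\gamma)}|f|v$ and the $A^+_{1,r}(\gamma)$ condition used identically; again the power of $|R|$ cancels, now because $\tfrac1r=1-\alpha$.

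To pass from the local bounds to the global one I would invoke a parabolic covering lemma of Besicovitch/Vitali type (the analogue of the covering results used in the one-weight parabolic theory of Kinnunen and Saari), applied to the family $\{R_{(x,t)}:(x,t)\in K\}$. The goal is a countable subfamily $\{R_i\}$ such that $K$ is covered by sets on which $w^r$ is controlled by the $A^+_{q,r}(\gamma)$-quantity of the associated rectangle — morally, $K\subset\bigcup_i R_i^-(\gamma)$, possibly up to a fixed parabolic enlargement absorbed into the constants — while the corresponding parts $R_i^+(\gamma)$ have bounded overlap, $\sum_i\chi_{R_i^+(\gamma)}\le N=N(n,p,\gamma)$. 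Summing the per-rectangle estimate, and using that $u\mapsto u^{r/q}$ is superadditive (since $r\ge q$) together with the overlap bound, then gives
\begin{align*}
w^r(K)&\le C[(w,v)]_{A^+_{q,r}(\gamma)}^{\,r}\lambda^{-r}\sum_i\Bigl(\int_{R_i^+(\gamma)}|f|^q v^q\Bigr)^{r/q}\\
&\le CN^{r/q}[(w,v)]_{A^+_{q,r}(\gamma)}^{\,r}\lambda^{-r}\Bigl(\int_{\R^{n+1}}|f|^q v^q\Bigr)^{r/q},
\end{align*}
and taking the supremum over $K$ would finish the proof.

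I expect the covering step to be the main obstacle. Parabolic rectangles are highly eccentric sets whose shape twists with scale, so the standard Besicovitch/Vitali machinery does not apply off the shelf; one must use, or adapt, a covering lemma tailored to the parabolic quasi-metric, and — because the operator is centered — check that the pieces of $K$ produced by the covering lie in (controlled enlargements of) the lower parts $R_i^-(\gamma)$ while the upper parts $R_i^+(\gamma)$ retain bounded overlap. Once the right covering lemma is in hand, everything else is the bookkeeping above, with the scaling relation $\tfrac1q-\tfrac1r=\alpha$ entering only through the exact cancellation of the powers of $|R|$.
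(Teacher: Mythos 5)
Your overall scheme (per--rectangle H\"older plus the $A^+_{q,r}(\gamma)$ condition, then superadditivity of $u\mapsto u^{r/q}$ over a covering family) is the right skeleton, and you correctly identify the covering step as the crux. The problem is that the covering lemma you propose to invoke does not exist: there is no subfamily $\{R_i\}$ whose lower parts $R_i^-(\gamma)$ cover $K$ while the upper parts $R_i^+(\gamma)$ satisfy $\sum_i\chi_{R_i^+(\gamma)}\le N(n,p,\gamma)$. Because the operator is one-sided, the sets where $|f|$ is averaged lie strictly forward in time of the sets where $w^r$ is measured, and rectangles of many different scales, with centers mutually outside each other's lower parts (so that none can be discarded by any selection), can all have upper parts stacking on a single future region; the overlap of the $R_i^+(\gamma)$ is therefore unbounded in general. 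This is precisely the obstruction that makes one-sided weighted theory hard, and it is not a matter of adapting Besicovitch to the parabolic quasi-metric. A secondary issue: even for the lower parts, Besicovitch-type selections cover $K$ by enlargements of the selected sets, and absorbing an enlargement ``into the constants'' requires a doubling property of $w^r\,dx\,dt$ that is not available; the paper circumvents this by truncating $w$ from below and using absolute continuity of the integral to control $w^r((1+\varepsilon)R^-(\gamma)\setminus R^-(\gamma))$ for small $\varepsilon$.

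What the paper actually does in place of your covering lemma is the Forzani--Mart\'{\i}n-Reyes--Ombrosi argument: (i) it first restricts to the annulus $\{\lambda<M^{\gamma+}_{\alpha,\xi}f\le 2\lambda\}$, so every selected rectangle satisfies the two-sided bound $\lambda<|R^+(\gamma)|^{\alpha}\dashint_{R^+(\gamma)}|f|\le 2\lambda$ (the upper bound is essential and is absent from your setup); (ii) a greedy selection by decreasing sidelength yields, not bounded overlap of the $R_i^+(\gamma)$, but only a Carleson-type packing estimate $\sum_{j\in J_i}|R_j^+(\gamma)|\le c\,|R_i^+(\gamma)|$ for the smaller selected rectangles whose upper parts meet $R_i^+(\gamma)$; (iii) combining this with the two-sided level-set bound, one removes from $R_i^+(\gamma)$ the set where the multiplicity exceeds $2c$ and shows the remainder $F_i$ still carries at least half of $\int_{R_i^+(\gamma)}|f|$. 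The H\"older step is then performed with $\int_{F_i}|f|$ and $\int_{F_i}|f|^qv^q$ rather than with the integrals over all of $R_i^+(\gamma)$, and it is the sets $F_i$ --- not the upper parts --- that have bounded overlap, which is what makes the final summation legitimate. Without the annulus decomposition, the packing lemma, and the extraction of the $F_i$, the argument does not close.
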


\begin{remark}
    For our proof we are required to work with the centered fractional maximal operator.  It is an open question to extend our results to the general fractional maximal operator $M_{\alpha}^{\gamma+}$.  Note that in the parabolic geometry these operators are no longer equivalent.
\end{remark}

We can also prove several strong-type results.  The first is a characterization in terms of Sawyer-type testing conditions.

\begin{theorem}
\label{thm:twoweightstrongsawyer}
Let $1<q\leq r<\infty$, $0\leq\alpha<1$
and $w,\,v$ to be weights.  Then 
\[
[(w,v)]_{S_{q,r,\alpha}^+} = 
\sup_R \biggl(  \int_{R^+} v^{1-q'} \,dx\dt\biggr)^{-\frac{1}{q}}
\biggl( \int_{R} (M^{+}_{\alpha}(v^{1-q'} \chi_{R^+}))^r \, w \,dx\dt \biggr)^\frac{1}{r} < \infty
\]
for every parabolic rectangle $R\subset\mathbb{R}^{n+1}$ if and and only if 
there is a constant $C=C(n,p,q,r)$ such that
\[
\biggl( \int_{\mathbb{R}^{n+1}} (M_{\alpha,c}^{+}f)^r \, w \,dx\dt\biggr)^\frac{1}{r} 
\leq
C [(w,v)]_{S_{q,r,\alpha}^+} \biggl( \int_{\mathbb{R}^{n+1}} \lvert f \rvert^q \, v \,dx\dt \biggr)^\frac{1}{q}
\]
for 
every $f\in L^1_{\mathrm{loc}}(\mathbb{R}^{n+1})$.
\end{theorem}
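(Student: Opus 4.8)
The plan is to prove the two implications separately; the forward (sufficiency) direction is the substantial one. Throughout, the mismatch between the centered operator $M_{\alpha,c}^{+}$ in the norm inequality and the uncentered operator $M_{\alpha}^{+}$ in the testing condition will be bridged by the elementary observation that (with $\gamma=0$) these operators are pointwise comparable: $M_{\alpha,c}^{+}g\le M_{\alpha}^{+}g$ trivially, and conversely any parabolic rectangle admissible for $M_{\alpha}^{+}$ at a point $(y,s)$ is contained in a bounded dilate of the parabolic rectangle centered at $(y,s)$ whose forward part still contains the original forward part, so $M_{\alpha}^{+}g\le C(n,p)\,M_{\alpha,c}^{+}g$. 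For \emph{necessity}, assume the norm inequality holds with constant $C$; fix a parabolic rectangle $R$, set $\sigma=v^{1-q'}$, and apply the inequality to a bounded, compactly supported truncation of $f=\sigma\chi_{R^{+}}$. Since $\int_{\R^{n+1}}|f|^{q}v\dx\dt=\int_{R^{+}}\sigma\dx\dt$, rearranging and then exhausting $R^{+}$ by monotone convergence gives
\[
\Big(\int_{R^{+}}\sigma\dx\dt\Big)^{-1/q}\Big(\int_{R}\big(M_{\alpha,c}^{+}(\sigma\chi_{R^{+}})\big)^{r}w\dx\dt\Big)^{1/r}\le C ,
\]
and replacing $M_{\alpha,c}^{+}$ by $M_{\alpha}^{+}$ via the comparison and taking the supremum over $R$ yields $[(w,v)]_{S_{q,r,\alpha}^{+}}\lesssim C$ (the degenerate cases $\int_{R^{+}}\sigma\in\{0,\infty\}$ are handled in the usual way).

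For \emph{sufficiency}, assume $[(w,v)]_{S_{q,r,\alpha}^{+}}<\infty$. By density it suffices to prove the inequality for nonnegative bounded $f$ with compact support. Using the comparison above together with a finite collection of dyadic grids of parabolic rectangles (the dyadic-grid and covering lemmas for parabolic rectangles, cf.\ the work of Kinnunen and Saari and of Ma, He, and Yan), we dominate $M_{\alpha,c}^{+}f$ pointwise by a sum of finitely many dyadic parabolic fractional maximal operators $M_{\alpha}^{\mathcal D}f$; since a dyadic parabolic rectangle is a parabolic rectangle and $M_{\alpha}^{\mathcal D}\lesssim M_{\alpha}^{+}$, the corresponding dyadic testing constants are controlled by $[(w,v)]_{S_{q,r,\alpha}^{+}}$. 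Thus it is enough to bound a single dyadic model, and on setting $\sigma=v^{1-q'}$ and $f=h\sigma$ the target becomes $\|M_{\alpha}^{\mathcal D}(h\sigma)\|_{L^{r}(w)}\lesssim[(w,v)]_{S_{q,r,\alpha}^{+}}\|h\|_{L^{q}(\sigma)}$.

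The core is a stopping-time argument. Fix $a>1$ and, for $k\in\mathbb Z$, let $\Omega_{k}=\{M_{\alpha}^{\mathcal D}(h\sigma)>a^{k}\}$; for the dyadic operator this is covered by a family of maximal dyadic parabolic rectangles $R_{j}^{k}$, pairwise disjoint for each fixed $k$, on whose forward parts the relevant normalized average of $h\sigma$ exceeds $a^{k}$ and, by maximality, is comparable to it. The sets $E_{j}^{k}:=R_{j}^{k}\setminus\Omega_{k+1}$ are pairwise disjoint over all $k,j$, and $\int(M_{\alpha}^{\mathcal D}(h\sigma))^{r}w\lesssim\sum_{k,j}a^{kr}w(E_{j}^{k})$. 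On each $R_{j}^{k}$ I would use Hölder's inequality to split the normalized average of $h\sigma$ over $(R_{j}^{k})^{+}$ into the product of $\dashint_{(R_{j}^{k})^{+}}\sigma$ and $\big(\sigma((R_{j}^{k})^{+})^{-1}\int_{(R_{j}^{k})^{+}}h^{q}\sigma\big)^{1/q}$, and then invoke the testing condition for the test function $\sigma\chi_{(R_{j}^{k})^{+}}$, together with the pointwise bound that $M_{\alpha}^{\mathcal D}(\sigma\chi_{(R_{j}^{k})^{+}})$ is at least the corresponding normalized average of $\sigma$ everywhere on $R_{j}^{k}$, so as to absorb $w(E_{j}^{k})\le w(R_{j}^{k})$ together with the $\dashint\sigma$-factors. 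After these cancellations one reaches
\[
\int_{\R^{n+1}}\big(M_{\alpha}^{\mathcal D}(h\sigma)\big)^{r}w\dx\dt\lesssim[(w,v)]_{S_{q,r,\alpha}^{+}}^{r}\sum_{k,j}\Big(\int_{(R_{j}^{k})^{+}}h^{q}\sigma\dx\dt\Big)^{r/q}\le[(w,v)]_{S_{q,r,\alpha}^{+}}^{r}\Big(\sum_{k,j}\int_{(R_{j}^{k})^{+}}h^{q}\sigma\dx\dt\Big)^{r/q},
\]
where the last step uses $r\ge q$.

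What remains — and this is the step I expect to be the main obstacle — is the Carleson-type estimate $\sum_{k,j}\int_{(R_{j}^{k})^{+}}h^{q}\sigma\dx\dt\lesssim\int_{\R^{n+1}}h^{q}\sigma\dx\dt=\|h\|_{L^{q}(\sigma)}^{q}$, which is \emph{not} a consequence of the fixed-level disjointness alone because the forward parts $(R_{j}^{k})^{+}$ overlap heavily across levels. The plan here is a principal-rectangle (Corona) decomposition: select a subfamily of the $R_{j}^{k}$ along which the $\sigma$-density of the forward parts changes by at least a fixed factor, so that the selected rectangles pack with bounded $\sigma$-overlap on their forward parts, assign every $R_{j}^{k}$ to its principal ancestor, and then conclude via a dyadic Carleson embedding theorem for the measure $h^{q}\sigma\dx\dt$. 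The delicate point, with no counterpart in the Euclidean (cube) setting, is that in the forward-in-time parabolic geometry the forward part $(R')^{+}$ of a sub-rectangle $R'\subseteq R$ need not be contained in $R^{+}$, so both the construction of the principal rectangles and the verification of their packing property must be carried out using the chaining and covering properties of parabolic rectangles (as developed in the one-weight theory of Kinnunen--Saari and Ma--He--Yan) rather than a naive nesting argument. Once this bound is in place, combining it with the displayed inequality and undoing the reductions completes the proof.
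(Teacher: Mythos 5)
Your outline shares the paper's first moves (reduction to finitely many dyadic parabolic lattices, level sets of the dyadic operator, H\"older plus the testing condition on each stopping rectangle), but it diverges at the end, and the step you yourself flag as ``the main obstacle'' is not merely an obstacle: the inequality you reduce to is false, and the proposed fix cannot repair it in the form stated. The sum $\sum_{k,j}\int_{(R_j^k)^+}h^q\sigma$ equals $\int h^q\sigma\cdot\sum_{k,j}\chi_{(R_j^k)^+}$, so the bound you need amounts to uniformly bounded overlap of \emph{all} stopping rectangles over \emph{all} levels $k$. For a fixed nonzero $h$ the maximal stopping rectangles at consecutive levels essentially nest, so a point in the ``core'' lies in $(R_j^k)^+$ for every $k$ up to about $\log_a\sup M^{\mathcal D}(h\sigma)$; the overlap is unbounded and no Carleson/corona selection can help, because by this stage your sum already ranges over every $(k,j)$. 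In the correct corona argument the principal rectangles must be chosen \emph{before} H\"older is applied: one keeps the factor $\sigma((R_j^k)^+)^{-1}\int_{(R_j^k)^+}h\sigma$ (controlled by its value on the principal ancestor $F$, whose family satisfies an $L^q(\sigma)$ Carleson estimate), and applies the testing condition once per principal rectangle to the whole group $\sum_{\pi(Q)=F}(|Q^+|^\alpha\dashint_{Q^+}\sigma)^r w(E_Q)$ --- which is itself delicate here because $Q^+\not\subset F^+$ for $Q\subset F$ in the parabolic geometry. The paper avoids all of this with Sawyer's original device: it packages the sum as a discrete linear operator $Tg(i,k)=\sigma(R_{i,k}^+)^{-1}\int_{S_{i,k}^+}|g|\sigma$ on $(\mathbb Z\times\mathbb Z,\mu)$ with $\mu(i,k)=(|R_{i,k}^+|^\alpha\sigma_{R_{i,k}^+})^rw(F_{i,k})$, proves $T\colon L^\infty(\sigma)\to\ell^\infty(\mu)$ and $T\colon L^1(\sigma)\to\ell^{r/q,\infty}(\mu)$ (the weak bound uses the testing condition once per \emph{maximal} stopping rectangle plus the disjointness of the $S_j^+$), and interpolates; the enlarged rectangles $R_{i,k}^+\supset S_{i,k}^+$ with the nesting property \eqref{eq:subset} are constructed precisely so that the testing condition is applicable despite the failure of forward-part nesting.

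A second concrete error: the pointwise comparison $M_\alpha^+g\le C(n,p)\,M_{\alpha,c}^+g$ is false in the parabolic geometry, and the paper explicitly remarks (after Theorem~\ref{thm:twoweightweaktype}) that the centered and uncentered operators are not equivalent. If $(y,s)\in R^-$ lies very close to the top time slice $t_0$ of $R^-$, then any rectangle $R'$ with $z((R')^-)=(y,s)$ and $(R')^+\supset R^+$ must satisfy $l(R')^p\lesssim t_0-s$, which can be arbitrarily small relative to $l(R)^p$; no bounded dilate works. Only the trivial direction $M_{\alpha,c}^+\le M_\alpha^+$ and the domination of $M_{\alpha,c}^+$ by finitely many dyadic operators (via the three-lattice construction) are available. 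This undercuts your necessity argument as written (upgrading the centered testing inequality to the uncentered one) and also your claim that ``the corresponding dyadic testing constants are controlled by $[(w,v)]_{S^+_{q,r,\alpha}}$'': passing from a dyadic $S$ to an enclosing genuine parabolic rectangle $R$ replaces $\sigma(S^+)^{-1/q}$ by the smaller $\sigma(R^+)^{-1/q}$, so the testing constant does not transfer for free. The paper never forms a dyadic testing constant; it applies the original testing condition directly to the enlarged rectangles $R_j$ inside the weak-type bound for $T$, where the mismatch between $\sigma(R_j^+)$ and $\int_{S_j^+}|g|\sigma$ is absorbed by the stopping inequality $\sigma(R_j^+)<\lambda^{-1}\int_{S_j^+}|g|\sigma$.
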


\begin{remark}
    In Theorem~\ref{thm:twoweightstrongsawyer} we are only able to prove the Sawyer-type characterization when the time lag $\gamma=0$.  It is an open problem to show that this characterization also holds for $\gamma>0$.
\end{remark}

\begin{remark}
    Mart\'{\i}n-Reyes and de la Torre~\cite{martinreyestorre1993} actually showed that their one-sided testing condition is equivalent to assuming that it holds with $I^+$ replaced everywhere by $I$.  It is an open question as whether we can replace $R^+$ by $R$ everywhere in the $S_{q,r,\alpha}^+$ condition.
\end{remark}

\begin{remark}
    In the one-weight case (i.e., if $v=w$), when $q>1$, we have that the $A_{q,r}^+$ condition is sufficient for the strong $(q,r)$ inequality to hold.  Thus this condition and the $S_{q,r,\alpha}^+$ are equivalent.  It would be interesting to have a direct proof of this fact that did not pass through the strong-type inequality.  In the classical case this was proved by Hunt, {\em et al.}~\cite{MR730066}.
\end{remark}

Our second strong-type result is a bump condition, similar to those proved by P\'erez discussed above.

\begin{theorem} 
\label{thm:twoweightstrongbump}
Let $0\leq\gamma<1$, $1<q,\,s<\infty$ and $w,v$ to be nonnegative measurable functions.
Assume that 
\[
[(w,v)]_{A^+_{q,q,s}(\gamma)} = 
\sup_{R \subset \mathbb R^{n+1}} \biggl( \dashint_{R^-(\gamma)} w \,dx\dt \biggr) 
\biggl( \dashint_{R^+(\gamma)} v^{\frac{s}{1-q}} \,dx\dt \biggr)^\frac{q-1}{s} < \infty .
\]
Then there is a constant $C=C(n,p,\gamma,q,s)$ such that
\[
\int_{\mathbb{R}^{n+1}} (M_c^{\gamma+}f)^q \, w \,dx\dt
\leq C [(w,v)]_{A_{q,q,s}^+(\gamma)} \int_{\mathbb{R}^{n+1}} \lvert f \rvert^q \, v \,dx\dt
\]
for every $f\in L^1_{\mathrm{loc}}(\mathbb{R}^{n+1})$.
\end{theorem}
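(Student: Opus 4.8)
My plan is to derive Theorem~\ref{thm:twoweightstrongbump} from the weak-type inequality of Theorem~\ref{thm:twoweightweaktype} (applied with $\alpha=0$, so that $r=q$) by a self-improvement argument together with Marcinkiewicz interpolation. The underlying mechanism is that a power bump with $s>1$ is ``open'': it forces the pair $(w,v)$ to satisfy the un-bumped parabolic $A^+$ condition not just at the exponent $q$ but on an open interval of exponents around $q$, and interpolating the resulting weak-type bounds recovers the strong-type bound at $q$.

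The first step is the self-improvement. Writing $\sigma=v^{\frac{1}{1-q}}$, the hypothesis becomes $\sup_{R}\bigl(\dashint_{R^-(\gamma)}w\bigr)\bigl(\dashint_{R^+(\gamma)}\sigma^{s}\bigr)^{\frac{q-1}{s}}=[(w,v)]_{A^+_{q,q,s}(\gamma)}<\infty$. Fix $q_0,q_1$ with $1+\frac{q-1}{s}<q_0<q<q_1<\infty$; this interval is nonempty precisely because $s>1$. Setting $\sigma_i=v^{\frac{1}{1-q_i}}=\sigma^{\theta_i}$ with $\theta_i=\frac{q-1}{q_i-1}$, one has $\theta_1<1$ and $1<\theta_0<s$, so in either case Jensen's inequality (used in the concave direction when $\theta_i<1$, and through the exponent $s/\theta_i>1$ when $\theta_i>1$) gives
\[
\Bigl(\dashint_{R^+(\gamma)}\sigma_i\Bigr)^{q_i-1}\le\Bigl(\dashint_{R^+(\gamma)}\sigma^{s}\Bigr)^{\frac{q-1}{s}},\qquad i=0,1.
\]
Consequently the pairs $(w^{1/q_i},v^{1/q_i})$ satisfy the un-bumped condition $A^+_{q_i,q_i}(\gamma)$ of Theorem~\ref{thm:twoweightweaktype} (with $\alpha=0$) and $[(w^{1/q_i},v^{1/q_i})]_{A^+_{q_i,q_i}(\gamma)}\le[(w,v)]_{A^+_{q,q,s}(\gamma)}^{1/q_i}$; here one must keep in mind that in that theorem the weights enter as $w^{r}$ and $v^{q}$, which is why one feeds it $(w^{1/q_i},v^{1/q_i})$ in order for the left-hand and right-hand weights to come out as $w$ and $v$.

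Next I apply Theorem~\ref{thm:twoweightweaktype} with $\alpha=0$ at the exponents $q_0$ and $q_1$ to these pairs (legitimate since $q_i>1$), obtaining
\[
w\bigl(\{M^{\gamma+}_{c}f>\lambda\}\bigr)\le C\,[(w,v)]_{A^+_{q,q,s}(\gamma)}\,\lambda^{-q_i}\int_{\mathbb{R}^{n+1}}|f|^{q_i}\,v\,dx\dt,\qquad i=0,1,
\]
that is, $M^{\gamma+}_{c}$ maps $L^{q_i}(v\,dx\dt)$ into $L^{q_i,\infty}(w\,dx\dt)$ with quasi-norm at most $C\,[(w,v)]_{A^+_{q,q,s}(\gamma)}^{1/q_i}$. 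Since $M^{\gamma+}_{c}$ is sublinear and the measures $v\,dx\dt$ and $w\,dx\dt$ are locally finite, the Marcinkiewicz interpolation theorem applied between these two endpoints (with $q_0<q<q_1$) gives the strong-type bound at $q$; tracking constants, the two weak-type quasi-norms combine to the power $\frac{1-\theta}{q_0}+\frac{\theta}{q_1}=\frac1q$ (where $\frac1q=\frac{1-\theta}{q_0}+\frac{\theta}{q_1}$), so that after raising to the $q$-th power one obtains $\int_{\mathbb{R}^{n+1}}(M^{\gamma+}_{c}f)^q\,w\,dx\dt\le C\,[(w,v)]_{A^+_{q,q,s}(\gamma)}\int_{\mathbb{R}^{n+1}}|f|^q\,v\,dx\dt$ with $C=C(n,p,\gamma,q,s)$, which is the asserted linear dependence.

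No single step in this scheme is genuinely hard; the points that demand care are the bookkeeping of the weight normalizations between Theorems~\ref{thm:twoweightweaktype} and~\ref{thm:twoweightstrongbump} and the verification that the admissible exponent interval $\bigl(1+\tfrac{q-1}{s},q\bigr)$ is nonempty, which is precisely where the strict inequality $s>1$ is used. I note that one could instead argue directly, via a parabolic Calder\'on--Zygmund decomposition at the levels $\{M^{\gamma+}_{c}f>a^k\}$ together with the bump H\"older inequality $\bigl(\dashint_{R^+(\gamma)}|f|\bigr)^q\le\bigl(\dashint_{R^+(\gamma)}|f|^q v\bigr)\bigl(\dashint_{R^+(\gamma)}v^{\frac{s}{1-q}}\bigr)^{\frac{q-1}{s}}$ and a stopping-time argument in the spirit of P\'erez; in that approach the main obstacle is the summation over scales, where the selected parabolic rectangles have unbounded overlap, and the interpolation route above sidesteps it entirely.
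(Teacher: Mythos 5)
Your proof is correct and follows essentially the same route as the paper: the power bump at exponent $q$ self-improves to the plain two-weight condition at smaller exponents, Theorem~\ref{thm:twoweightweaktype} (with $\alpha=0$) then supplies weak-type bounds, and Marcinkiewicz interpolation yields the strong bound at $q$ with the linear dependence on $[(w,v)]_{A^+_{q,q,s}(\gamma)}$. The only difference is cosmetic: the paper interpolates the single weak-type estimate at the critical exponent $t=1+\frac{q-1}{s}$ (where the bump condition coincides \emph{exactly} with the $A^+_{t,t}(\gamma)$ condition for $(w^{1/t},v^{1/t})$, so no Jensen step is needed) against the trivial endpoint $\norm{M^{\gamma+}_c f}_{L^\infty(w)}\le\norm{f}_{L^\infty(v)}$, whereas you interpolate between two weak-type estimates at $q_0<q<q_1$, which costs an extra Jensen/H\"older self-improvement but is equally valid.
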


\begin{remark}
    To the best of our knowledge, this result is the first one-sided bump condition to appear in the literature, even for one-sided operators on the real line.  The problem of proving off-diagonal estimates (i.e., for $q<r$) and proving bump conditions using the Orlicz bumps known for classical operators is open.  Our proof, which uses Theorem~\ref{thm:twoweightweaktype}, only holds for the centered maximal operator; if Theorem~\ref{thm:twoweightweaktype} were true for more general maximal operators, our proof would extend to this setting immediately.
\end{remark}

Finally, we are able to prove a Fefferman--Stein type inequality for the centered fractional maximal operator.

\begin{theorem}
\label{thm:twoweightFS}
Let $0\leq\gamma<1$, $1<q<\infty$
and let $w$ be a weight.
Then there are constants $C_1 = C_1(n,p,\gamma)$ and $C_2=C_2(n,p,\gamma,q)$ such that
\[
w( \{ M_{c}^{\gamma+}f > \lambda \}) \leq \frac{C_1}{\lambda} \int_{\mathbb{R}^{n+1}} \lvert f \rvert \, M^{\gamma-}w\,dx\dt
\]
for 
every $f\in L^1_{\mathrm{loc}}(\mathbb{R}^{n+1})$.
Moreover, 
\[
\int_{\mathbb{R}^{n+1}} (M_c^{\gamma+}f)^q \, w \,dx\dt
\leq C_2  \int_{\mathbb{R}^{n+1}} \lvert f \rvert^q \, M^{\gamma-}w\,dx\dt
\]
for 
every $f\in L^1_{\mathrm{loc}}(\mathbb{R}^{n+1})$.
\end{theorem}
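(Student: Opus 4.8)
The plan is to derive both inequalities from Theorem~\ref{thm:twoweightweaktype}, exactly as in the classical situation where the Fefferman--Stein weak $(1,1)$ bound is the statement that $(w,Mw)$ lies in the two-weight $A_1$ class with absolute constant. Here the analogous fact will be that the pair $(w,M^{\gamma-}w)$ satisfies the $A^+_{1,1}(\gamma)$ condition with a constant depending only on $n,p,\gamma$; the weak-type estimate is then immediate and the strong-type estimate follows by interpolation with the trivial $L^\infty$ bound.

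The first, and essentially the only nontrivial, step is the geometric claim: there is a constant $c=c(n,p,\gamma)>0$ such that for every parabolic rectangle $R\subset\mathbb R^{n+1}$ and for a.e.\ $(x,t)\in R^+(\gamma)$,
\[
M^{\gamma-}w(x,t)\ \geq\ c\,\dashint_{R^-(\gamma)} w\,dx\dt .
\]
This is built into the definitions from Section~\ref{sec:twoweightMuckenhoupt}: the sub-rectangle $R^-(\gamma)$ lies strictly in the time-past of $R^+(\gamma)$, separated by a time gap of order $\gamma L^p$, and within a comparable spatial scale, so for each $(x,t)\in R^+(\gamma)$ one produces an admissible competitor for $M^{\gamma-}$ at $(x,t)$ — a fixed dilate of $R$, adjusted for the time lag — whose backward part contains $R^-(\gamma)$ and whose measure is comparable to $\lvert R^-(\gamma)\rvert$, with all constants depending only on $n,p,\gamma$; taking the supremum in the definition of $M^{\gamma-}w(x,t)$ over such rectangles gives the inequality. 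Consequently $\big(\essinf_{(x,t)\in R^+(\gamma)} M^{\gamma-}w(x,t)\big)^{-1}\dashint_{R^-(\gamma)}w\,dx\dt\leq c^{-1}$ for every $R$, hence $[(w,M^{\gamma-}w)]_{A^+_{1,1}(\gamma)}\leq c^{-1}=C(n,p,\gamma)$.

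Next I would apply Theorem~\ref{thm:twoweightweaktype} with $q=r=1$, $\alpha=0$, and $v=M^{\gamma-}w$; since $v^q=M^{\gamma-}w$ when $q=1$, this yields $w(\{M_c^{\gamma+}f>\lambda\})\leq [(w,M^{\gamma-}w)]_{A^+_{1,1}(\gamma)}\frac{C}{\lambda}\int_{\mathbb R^{n+1}}\lvert f\rvert M^{\gamma-}w\,dx\dt\leq \frac{C_1}{\lambda}\int_{\mathbb R^{n+1}}\lvert f\rvert M^{\gamma-}w\,dx\dt$ with $C_1=C_1(n,p,\gamma)$, which is the first assertion. For the second assertion I would interpolate: the operator $f\mapsto M_c^{\gamma+}f$ is sublinear, it maps $L^1(M^{\gamma-}w\,dx\dt)\to L^{1,\infty}(w\,dx\dt)$ with constant $C_1$ by what was just shown, and it maps $L^\infty(M^{\gamma-}w\,dx\dt)\to L^\infty(w\,dx\dt)$ with constant $1$, because $M_c^{\gamma+}f$ is a supremum of averages of $\lvert f\rvert$ and, for $w$-a.e.\ $(x,t)$, every parabolic rectangle relevant to $M_c^{\gamma+}f(x,t)$ has its forward part contained, up to a null set, in $\{M^{\gamma-}w>0\}$ — this is the displayed geometric fact read in the contrapositive. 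The Marcinkiewicz interpolation theorem for these two weighted measure spaces then gives
\[
\biggl(\int_{\mathbb R^{n+1}} (M_c^{\gamma+}f)^q\,w\,dx\dt\biggr)^{1/q}\ \leq\ C_2\biggl(\int_{\mathbb R^{n+1}} \lvert f\rvert^q\,M^{\gamma-}w\,dx\dt\biggr)^{1/q}
\]
for $1<q<\infty$, with $C_2=C_2(n,p,\gamma,q)$, which is the second assertion after raising to the $q$-th power.

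I expect the main obstacle to be the geometric claim itself; everything after it is soft. One must pin down, from the precise definitions of $R^\pm(\gamma)$ and of $M^{\gamma-}$, which parabolic rectangle to use as a competitor for $M^{\gamma-}w$ at a point of $R^+(\gamma)$, and verify both that its backward part catches $R^-(\gamma)$ and that its measure is comparable to $\lvert R^-(\gamma)\rvert$, tracking how the time lag $\gamma$ displaces $R^-(\gamma)$ and $R^+(\gamma)$ relative to $R$, with constants depending only on $n,p,\gamma$. A secondary, routine point is the clean justification of the $L^\infty\to L^\infty$ endpoint, i.e.\ that the set $\{M^{\gamma-}w=0\}$ may be discarded when evaluating $M_c^{\gamma+}f$ on $\{w>0\}$, which reduces to the same comparison.
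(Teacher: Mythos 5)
Your proof is correct and follows essentially the same route as the paper's: one shows the pair $(w,\,M^{\gamma-}w)$ belongs to $A^+_{1,1}(\gamma)$ with constant depending only on $n,p,\gamma$ (the paper obtains this instantly from Proposition~\ref{thm:maximalA1-cond}), then applies Theorem~\ref{thm:twoweightweaktype} with $q=r=1$, $\alpha=0$ for the weak bound and Marcinkiewicz interpolation against the trivial $L^\infty$ endpoint for the strong bound. The only remark worth making is that what you call the main obstacle is in fact immediate: since $M^{\gamma-}$ is the \emph{uncentered} operator, the rectangle $R$ itself is an admissible competitor at every $(x,t)\in R^+(\gamma)$, so your geometric claim holds with $c=1$ and no dilated rectangle is needed.
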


\begin{remark}
One feature of this result is that we need the larger, uncentered parabolic maximal operator on the righthand side of the inequality.  It is an open question if this result is true with the centered maximal operator on the right or with the uncentered maximal operator on the left.
\end{remark}

\begin{remark}
After this article was finished, we found that Kong, Yang, Yuan and Zhu \cite{KYYZ25} also introduced concurrently
the two-weight parabolic Muckenhoupt classes
and studied the two-weight norm inequalities for parabolic fractional maximal functions.
Except for the definition and two basic properties of the two-weight parabolic Muckenhoupt classes, their article and ours have no significant overlap.
\end{remark}

The remainder of this paper is organized as follows.  In Section~\ref{sec:twoweightMuckenhoupt} we gather together the definitions of the parabolic maximal operators that appear in our work, define the two-weight parabolic Muckenhoupt classes, and prove some basic properties analogous to those that hold in the classical case.  In Section~\ref{section:weak} we prove Theorems~\ref{thm:twoweightweaktype} and~\ref{thm:twoweightFS}.  To prove the first result, we use a version of the covering argument in~\cite{ForzaniMartinreyesOmbrosi2011} adapted to the parabolic geometry.
The technique has also been used in~\cite{kinnunenSaariParabolicWeighted,MaHeYan2023,KinnunenMyyryYangZhu2022,KinnunenMyyry2023} for different parabolic maximal functions.  The proof of the second result follows by showing the pairs of weights satisfy a parabolic Muckenhoupt condition and then applying interpolation.  In Section~\ref{section:strong} we prove Theorems~\ref{thm:twoweightstrongbump} and~\ref{thm:twoweightstrongsawyer}. The  proof of Theorem~\ref{thm:twoweightstrongbump} follows by adapting a very general argument due to P\'erez~\cite{perez1993} to our setting.  The proof of Theorem~\ref{thm:twoweightstrongsawyer} is much more technical.  In recent years much progress has been made in harmonic analysis by using dyadic grids in place of arbitrary cubes.  Here, our main contribution is to develop an analogous theory of dyadic rectangles in the parabolic geometry, defining dyadic versions of the parabolic maximal operators, and then relating them to their non-dyadic counterparts. We believe that this machinery will have additional applications in the study of parabolic operators.

\section{two-weight parabolic Muckenhoupt class}
\label{sec:twoweightMuckenhoupt}

Throughout this paper we will use the following notation.
The underlying space that we work on is $\mathbb{R}^{n+1}=\{(x,t):x=(x_1,\dots,x_n)\in\mathbb R^n,t\in\mathbb R\}$.
Unless otherwise stated, constants $C$, $c$, etc. are positive and may change at each appearance.  Any  dependencies on parameters are indicated in parentheses: e.g., $c(n,q)$.
Given a measurable set $A \subset \mathbb{R}^{n+1}$, denote its  Lebesgue measure  by $\lvert A\rvert$.
A (Euclidean) cube $Q$ is a bounded rectangle in $\mathbb R^n$, whose sides parallel to the coordinate axes and have equal length, i.e.,
$Q=Q(x,L)=\{y \in \mathbb R^n: \lvert y_i-x_i\rvert \leq \frac L2,\,i=1,\dots,n\}$
with $x\in\mathbb R^n$ and $L>0$. 
The point $x$ is called the center of the cube and $L$ is sidelength of the cube.
By a weight we mean a nonnegative, measurable function on $\mathbb{R}^{n+1}$. Given a weight $w$ and $A \subset \mathbb{R}^{n+1}$,
we define
\[ w(A) = \int_A w\,dx\dt. \]

We now introduce the parabolic geometry on $\mathbb{R}^{n+1}$.  Instead of working with Euclidean cubes, we will use parabolic rectangles.

\begin{definition}\label{def_parrect}
Let $1\leq p<\infty$, $x\in\mathbb R^n$, $t \in \mathbb{R}$ and $L>0$.
A parabolic rectangle centered at $(x,t)$ with sidelength $L$ is the set
\[
R = R(x,t,L,p) = Q(x,L) \times (t-L^p, t+L^p).
\]
Define its upper and lower parts to be
\[
R^+(\gamma) = Q(x,L) \times (t+\gamma L^p, t+L^p) 
\quad\text{and}\quad
R^-(\gamma) = Q(x,L) \times (t - L^p, t - \gamma L^p) ,
\]
where $0 \leq \gamma < 1$ is called the time lag.
The center point of $R^\pm(\gamma)$ is denoted by $z(R^\pm(\gamma))$.

\end{definition}

Note that the rectangle $R^-(\gamma)$ is the reflection of $R^+(\gamma)$ with respect to the time slice $\mathbb{R}^n \times \{t\}$.
We denote the spatial sidelength of a parabolic rectangle $R$  by $l_x(R)=L$,  and denote the time length by $l_t(R)=2L^p$.
For simplicity, we will write $R^\pm$ instead of $R^{\pm}(0)$.
The top of a rectangle $R = R(x,t,L)$ is $Q(x,L) \times\{t+L^p\}$
and the bottom is $Q(x,L) \times\{t-L^p\}$.
We define the $\lambda$-dilate of $R$  by $\lambda>0$ to be the set $\lambda R = R(x,t,\lambda L^p)$.

Given a measurable set $A\subset\mathbb{R}^{n+1}$ with $0<|A|<\infty$, the integral average of $f \in L^1(A)$ over $A$ is defined to be
\[
f_A = \dashint_A f \dx \dt = \frac{1}{\lvert A\rvert} \int_A f(x,t)\dx\dt .
\]

\medskip

We now define the  parabolic fractional maximal functions and two-weight parabolic Muckenhoupt classes.
We first define the parabolic maximal functions.
Hereafter, when integrating over parabolic rectangles, we will omit the differentials $\dx \dt$.

\begin{definition}
\label{def:parmaximalfct}
Let $0\leq\gamma<1$, $0\leq\alpha<1$, and let $f$ be a locally integrable function. 
The centered, forward in time  parabolic fractional maximal function is defined by
\[
M^{\gamma+}_{\alpha,c} f(x,t) = \sup_{z(R^-(\gamma))=(x,t)}
\lvert R^+(\gamma) \rvert^\alpha
\dashint_{R^+(\gamma)} \lvert f \rvert ,
\]
where the supremum is taken over the parabolic rectangles $R\subset\mathbb{R}^{n+1}$ such that $(x,t)$ is the center point of $R^-(\gamma)$.
Similarly, the centered, backward in time parabolic fractional maximal function is
\[
M^{\gamma-}_{\alpha,c} f(x,t) = \sup_{z(R^+(\gamma))=(x,t)} 
\lvert R^-(\gamma) \rvert^\alpha
\dashint_{R^-(\gamma)} \lvert f \rvert ,
\]
where the supremum is taken over the parabolic rectangles $R\subset\mathbb{R}^{n+1}$ such that $(x,t)$ is the center point of $R^+(\gamma)$.
Moreover, we define the uncentered counterparts by
\[
M^{\gamma+}_\alpha f(x,t) = \sup_{R^-(\gamma)\ni(x,t)} 
\lvert R^+(\gamma) \rvert^\alpha
\dashint_{R^+(\gamma)} \lvert f \rvert
\]
and
\[
M^{\gamma-}_\alpha f(x,t) = \sup_{R^+(\gamma)\ni(x,t)} 
\lvert R^-(\gamma) \rvert^\alpha
\dashint_{R^-(\gamma)} \lvert f \rvert.
\]
If $\gamma=0$, we write $M_{\alpha,c}^\pm f = M_{\alpha,c}^{0\pm} f$ and $M^\pm_\alpha f = M^{0\pm}_\alpha f$.
If $\alpha=0$, we write
$M_{c}^{\gamma\pm} f = M_{0,c}^{\gamma\pm} f$ and $M^{\gamma\pm} f = M^{\gamma\pm}_0 f$.
\end{definition}

We now define our generalization of the two-weight Muckenhoupt classes to the parabolic setting.

\begin{definition}
Let $0\leq\gamma<1$ and $1\leq q\leq r<\infty$.
If $q>1$, a pair of weights $(w,v)$ is in the parabolic Muckenhoupt class $A^+_{q,r}(\gamma)$ if 
\[
[(w,v)]_{A^+_{q,r}(\gamma)} = \sup_{R \subset \mathbb R^{n+1}} \biggl( \dashint_{R^-(\gamma)} w^r  \biggr)^\frac{1}{r} 
\biggl( \dashint_{R^+(\gamma)} v^{-q'}  \biggr)^\frac{1}{q'} < \infty .
\]
If this condition  holds with the time axis reversed (i.e., exchanging $R^+$ and $R^-$), then $(w,v) \in A^-_{q,r}(\gamma)$.
If $q=1$,  $(w,v)$ is in the parabolic Muckenhoupt class $A^+_{1,r}(\gamma)$ if
\[
\biggl( \dashint_{R^-(\gamma)} w^r  \biggr)^\frac{1}{r} \leq C \essinf_{(x,t)\in R^+(\gamma)} v(x,t)
\]
for every parabolic rectangle $R \subset \mathbb R^{n+1}$. Denote the infimum of the constants for which this inequality holds by $[(w,v)]_{A^+_{1,r}(\gamma)}$.
If this condition  holds with the time axis reversed, then $(w,v) \in A^-_{1,r}(\gamma)$.
\end{definition}

We have the following equivalent condition for $A_{1,r}^+(\gamma)$.

\begin{proposition}
\label{thm:maximalA1-cond}
Let $0\leq\gamma<1$.
A pair of weights $(w,v)$  is in $A_{1,r}^+(\gamma)$ if and only if there is a constant $C$ such that 
\begin{equation}
\label{maximalA_1-cond}
(M^{\gamma-}w^r(x,t) )^\frac{1}{r} \leq C v(x,t)
\end{equation}
for a.e. $(x,t) \in \mathbb{R}^{n+1}$.
Moreover, we can take $C = [(w,v)]_{A^+_{1,r}(\gamma)}$.
This result holds for $A_{1,r}^-(\gamma)$ with $M^{\gamma+}$ in place of $M^{\gamma-}$.
\end{proposition}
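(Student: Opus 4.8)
The plan is to prove the two implications separately, keeping track of constants so as to obtain the sharp value $C=[(w,v)]_{A^+_{1,r}(\gamma)}$.

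\emph{Sufficiency of \eqref{maximalA_1-cond}.} Assume \eqref{maximalA_1-cond} holds with constant $C$ and fix a parabolic rectangle $R$. Whenever $(x,t)\in R^+(\gamma)$, the rectangle $R$ is itself one of the rectangles competing in the supremum defining $M^{\gamma-}w^r(x,t)$, so $M^{\gamma-}w^r(x,t)\ge\dashint_{R^-(\gamma)}w^r$ for all such $(x,t)$. Raising \eqref{maximalA_1-cond} to the power $r$ and combining the two bounds gives $\dashint_{R^-(\gamma)}w^r\le (Cv(x,t))^r$ for a.e. $(x,t)\in R^+(\gamma)$; taking the essential infimum over $R^+(\gamma)$ yields the $A^+_{1,r}(\gamma)$ inequality for $R$ with the same constant, hence $[(w,v)]_{A^+_{1,r}(\gamma)}\le C$.

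\emph{Necessity.} Now assume $(w,v)\in A^+_{1,r}(\gamma)$ and set $C=[(w,v)]_{A^+_{1,r}(\gamma)}$; one checks (using that weights are finite a.e.\ together with the $A^+_{1,r}(\gamma)$ condition) that $w^r\in L^1_{\mathrm{loc}}(\mathbb{R}^{n+1})$. The obstacle is that for each individual rectangle $R$ the $A^+_{1,r}(\gamma)$ condition only gives $\bigl(\dashint_{R^-(\gamma)}w^r\bigr)^{1/r}\le Cv$ a.e.\ on $R^+(\gamma)$, off a null set $N_R$ depending on $R$, while $M^{\gamma-}w^r$ is a supremum over uncountably many rectangles, so the $N_R$ cannot simply be unioned. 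To get around this I would first show that $M^{\gamma-}w^r$ can be computed using only the countable family $\mathcal F$ of parabolic rectangles with center in $\mathbb Q^{n+1}$ and sidelength in $\mathbb Q_{>0}$, namely
\[
M^{\gamma-}w^r(x,t)=\sup\Bigl\{\,\dashint_{R^-(\gamma)}w^r : R\in\mathcal F,\ (x,t)\in R^+(\gamma)\Bigr\}\qquad\text{for every }(x,t)\in\mathbb{R}^{n+1}.
\]
This reduction rests on two facts: the map $(y,s,L)\mapsto\dashint_{R(y,s,L,p)^-(\gamma)}w^r$ is continuous (dominated convergence, using $w^r\in L^1_{\mathrm{loc}}$ and that the boundaries of the rectangles are null), and given any parabolic rectangle $R$ with $(x,t)\in R^+(\gamma)$ one can slightly enlarge its spatial sidelength so that $(x,t)$ stays in the (time–open) set $R^+(\gamma)$; consequently the admissible-parameter set $\{(y,s,L):(x,t)\in R(y,s,L,p)^+(\gamma)\}$ is contained in the closure of its interior and hence has its rational points dense in it.

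With the countable reduction in hand, let $N=\bigcup_{R\in\mathcal F}N_R$, a null set. For $(x,t)\notin N$ and every $R\in\mathcal F$ with $(x,t)\in R^+(\gamma)$ we have $\bigl(\dashint_{R^-(\gamma)}w^r\bigr)^{1/r}\le Cv(x,t)$; taking the supremum over such $R$ and invoking the displayed identity gives $(M^{\gamma-}w^r(x,t))^{1/r}\le Cv(x,t)$, which is \eqref{maximalA_1-cond} with $C=[(w,v)]_{A^+_{1,r}(\gamma)}$. Together with the sufficiency part this also shows the two optimal constants coincide. The statement for $A^-_{1,r}(\gamma)$ follows by applying what has just been proved to the weights reflected in the time slice $\mathbb{R}^n\times\{0\}$, which interchanges $M^{\gamma-}$ with $M^{\gamma+}$. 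I expect the only genuine work to be in justifying the countable reduction — that is, checking the admissible-parameter region is the closure of its interior, so that continuity of the average lets us restrict to rational rectangles.
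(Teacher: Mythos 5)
Your argument is correct and follows essentially the same route as the paper's: the sufficiency direction is identical, and for necessity both proofs reduce to a countable family of rational parabolic rectangles (using that the time interval of $R^+(\gamma)$ is open and $\gamma<1$, so a witnessing rectangle can be perturbed to a rational one without losing the point or appreciably changing the average over $R^-(\gamma)$) and then take the union of the null sets $\{(x,t)\in R^+(\gamma): v(x,t)<\essinf_{R^+(\gamma)}v\}$. The only difference is packaging — you prove a clean identity expressing $M^{\gamma-}w^r$ as a supremum over rational rectangles, whereas the paper argues by contradiction on the exceptional set — but the key perturbation step and the countable-union conclusion are the same.
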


\begin{proof}
Assume that \eqref{maximalA_1-cond} holds. Then for almost every $(x,t) \in R^+(\gamma)$,
\[
\biggl( \dashint_{R^-(\gamma)} w^r \biggr)^\frac{1}{r} \leq ( M^{\gamma-} w^r(x,t) )^\frac{1}{r} \leq C v(x,t). 
\]
Thus, if we take the essential infimum over every $(x,t) \in R^+(\gamma)$,  we get that $(w,v)\in A_{1,r}^+(\gamma)$.

Now assume that $(w,v)\in A_{1,r}^+(\gamma)$ with the constant $C=[(w,v)]_{A^+_{1,r}(\gamma)}$.
Define
\[
E = \{ (x,t)\in \mathbb{R}^{n+1}: ( M^{\gamma-}w^r(x,t) )^\frac{1}{r} > C v(x,t) \}
\]
and let $(x,t)\in E$.
Then there is a parabolic rectangle $R$ such that $(x,t) \in R^+(\gamma)$ and
\begin{equation}
\label{A1_contra}
\biggl( \dashint_{R^-(\gamma)} w^r \biggr)^\frac{1}{r} > C v(x,t).
\end{equation}
For every $\varepsilon>0$ there exists a rectangle 
$\widetilde{R}$,
whose spatial vertices and the bottom time coordinate have rational coordinates,
such that $(x,t)\in \widetilde{R}^+(\gamma)$, $R^-(\gamma) \subset \widetilde{R}^-(\gamma)$ and $\lvert \widetilde{R}^-(\gamma) \setminus R^-(\gamma) \rvert <\varepsilon$. 
This is possible since the time interval of $R^+(\gamma)$ is open and $\gamma<1$, and thus there is a positive distance between $t$ and the time bottom of $R^+(\gamma)$.
By our choice of  $\widetilde{R}$,
$$\vert \widetilde{R}^-(\gamma) \rvert = \lvert R^-(\gamma) \rvert + \lvert \widetilde{R}^-(\gamma) \setminus R^-(\gamma) \rvert < \lvert R^-(\gamma) \rvert + \varepsilon. $$
If we fix $\varepsilon>0$ sufficiently small, we have that
\[
\biggl( \dashint_{\widetilde{R}^-(\gamma)} w^r \biggr)^\frac{1}{r} 
\geq \biggl( \frac{1}{\lvert R^-(\gamma) \rvert + \varepsilon} \int_{R^-(\gamma)} w^r  \biggr)^\frac{1}{r} > C v(x,t) .
\]
Hence, without loss of generality we may assume that the spatial vertices and the bottom time coordinate of the parabolic rectangles $R$ that satisfy \eqref{A1_contra} are rational.
The $A_{1,r}^+(\gamma)$ condition and \eqref{A1_contra} imply that
\[
C v(x,t) < \biggl( \dashint_{R^-(\gamma)} w^r \biggr)^\frac{1}{r} \leq C \essinf_{(y,s)\in R^+(\gamma)} v(y,s) .
\]
Since $C>0$, it follows that
\[
v(x,t) < \essinf_{(y,s)\in R^+(\gamma)} v(y,s) .
\]

Enumerate the parabolic rectangles in $\mathbb{R}^{n+1}$ with rational spatial vertices and rational bottom time coordinates by  $\{R_i\}_{i\in\mathbb{N}}$. For each $i\in\mathbb{N}$, define
\[
E_i = \Bigl\{(x,t)\in R_i^+(\gamma): v(x,t) < \essinf_{(y,s)\in R_i^+(\gamma)} v(y,s) \Bigr\} .
\]
Then for every $i\in\mathbb{N}$, $\lvert E_i \rvert =0$,  and the argument above shows that $E\subset\bigcup_{i\in\mathbb{N}} E_i$.
Thus,  $\lvert E \rvert =0$ and so \eqref{maximalA_1-cond} follows with $C=[(w,v)]_{A^+_{1,r}(\gamma)}$.
\end{proof}

The next lemma shows that the class of parabolic Muckenhoupt weights is closed if we take maxima and minima.

\begin{lemma}
\label{lem:trunctation}
Let $0\leq\gamma<1$, $1\leq q\leq r<\infty$,  and
$(w_1,v_1), (w_2,v_2)\in A_{q,r}^+(\gamma)$. Then we have $( \max\{w_1,w_2\}, \max\{v_1,v_2\}) \in A_{q,r}^+(\gamma)$ and $( \min\{w_1,w_2\}, \min\{v_1,v_2\}) \in A_{q,r}^+(\gamma)$.

\end{lemma}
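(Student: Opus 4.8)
The plan is to reduce both assertions to a single elementary pointwise inequality and then verify the Muckenhoupt condition directly. For the case $q>1$, the relevant quantity to control is
\[
\biggl( \dashint_{R^-(\gamma)} \max\{w_1,w_2\}^r \biggr)^{\frac1r}
\biggl( \dashint_{R^+(\gamma)} \max\{v_1,v_2\}^{-q'} \biggr)^{\frac1{q'}}.
\]
First I would use the crude bound $\max\{w_1,w_2\}^r \le w_1^r + w_2^r$ on $R^-(\gamma)$, so the first factor is bounded by $\bigl( \dashint_{R^-(\gamma)} w_1^r \bigr)^{1/r} + \bigl( \dashint_{R^-(\gamma)} w_2^r \bigr)^{1/r}$ (up to the usual constant from subadditivity of $t\mapsto t^{1/r}$). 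For the second factor, the key observation is the opposite monotonicity: $\max\{v_1,v_2\}^{-q'} \le \min\{v_1^{-q'},v_2^{-q'}\} \le v_i^{-q'}$ for $i=1,2$, hence $\bigl( \dashint_{R^+(\gamma)} \max\{v_1,v_2\}^{-q'} \bigr)^{1/q'} \le \min_i \bigl( \dashint_{R^+(\gamma)} v_i^{-q'} \bigr)^{1/q'}$. Combining, the product splits into a sum of two terms, the $i$-th of which is at most $[(w_i,v_i)]_{A^+_{q,r}(\gamma)}$ because in each term we may pair the $w_i$-average with the matching $v_i$-average (using the $\min$ to downgrade). Taking the supremum over $R$ gives $[(\max\{w_1,w_2\},\max\{v_1,v_2\})]_{A^+_{q,r}(\gamma)} \lesssim [(w_1,v_1)]_{A^+_{q,r}(\gamma)} + [(w_2,v_2)]_{A^+_{q,r}(\gamma)}$.

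For the minimum pair, the roles swap symmetrically: on $R^-(\gamma)$ one has $\min\{w_1,w_2\}^r \le \min\{w_1^r,w_2^r\} \le w_i^r$, giving $\bigl( \dashint_{R^-(\gamma)} \min\{w_1,w_2\}^r \bigr)^{1/r} \le \min_i \bigl( \dashint_{R^-(\gamma)} w_i^r \bigr)^{1/r}$, while on $R^+(\gamma)$ the convexity bound $\min\{v_1,v_2\}^{-q'} \le v_1^{-q'} + v_2^{-q'}$ lets the $v$-factor split as a sum. One then again pairs matching indices and bounds each resulting term by the corresponding $A^+_{q,r}(\gamma)$ constant. The case $q=1$ is handled the same way, using the defining inequality with $\essinf$: for the maximum, $\bigl(\dashint_{R^-(\gamma)}\max\{w_1,w_2\}^r\bigr)^{1/r}$ splits into a sum bounded by $\sum_i [(w_i,v_i)]_{A^+_{1,r}(\gamma)}\,\essinf_{R^+(\gamma)} v_i \le \bigl(\sum_i [(w_i,v_i)]_{A^+_{1,r}(\gamma)}\bigr)\,\essinf_{R^+(\gamma)}\max\{v_1,v_2\}$, using $\essinf v_i \le \essinf\max\{v_1,v_2\}$; for the minimum, one uses instead $\essinf_{R^+(\gamma)}\min\{v_1,v_2\} = \min_i \essinf_{R^+(\gamma)} v_i$ together with the pointwise bound $\min\{w_1,w_2\}^r \le w_i^r$ for a suitably chosen $i$ — here one cannot split, so instead one picks, for the given $R$, the index $i$ realizing $\min_i \essinf_{R^+(\gamma)} v_i$ and estimates directly.

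I do not anticipate a genuine obstacle: everything rests on the two elementary facts that $\max$ and $\min$ of $r$-th powers are dominated by, respectively, the sum and each summand, with the reverse behavior under the negative exponent $-q'$ on the $v$-side, so that in every case exactly one of the two averages splits additively and the other is controlled termwise by a single index. The only point requiring mild care is bookkeeping the multiplicative constants (from subadditivity of $t\mapsto t^{1/r}$ when $r\ge 1$, which is harmless), and noting that in the $q=1$ minimum case the argument is by index selection rather than splitting. A clean way to present all four cases uniformly is to prove the single auxiliary claim: if $a_i := \bigl(\dashint_{R^-(\gamma)} w_i^r\bigr)^{1/r}$ and $b_i := \bigl(\dashint_{R^+(\gamma)} v_i^{-q'}\bigr)^{1/q'}$ with $a_i b_i \le K$, then both $\bigl(\dashint \max\{w_1,w_2\}^r\bigr)^{1/r}\bigl(\dashint\max\{v_1,v_2\}^{-q'}\bigr)^{1/q'}$ and the analogous $\min$-quantity are $\le 2K$; this is then applied with $K = \max_i[(w_i,v_i)]_{A^+_{q,r}(\gamma)}$ and the supremum over $R$ taken at the end.
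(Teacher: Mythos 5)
Your proposal is correct and follows essentially the same route as the paper: the paper's decomposition of $R^-(\gamma)$ into $\{w_1>w_2\}$ and $\{w_1\le w_2\}$ is exactly your pointwise bound $\max\{w_1,w_2\}^r\le w_1^r+w_2^r$ combined with subadditivity of $t\mapsto t^{1/r}$, and the termwise pairing with the matching $v_i$-average is identical. Your remark that the $q=1$ minimum case requires index selection (via $\essinf_{R^+(\gamma)}\min\{v_1,v_2\}=\min_i\essinf_{R^+(\gamma)}v_i$) rather than splitting is a correct refinement of a detail the paper elides by declaring the minimum case ``essentially the same.''
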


\begin{proof}
Let $w = \max\{w_1,w_2\}$ and $v = \max\{v_1,v_2\}$. 
First assume that $1<q<\infty$.  Then
we have
\begin{align*}
&\biggl( \dashint_{R^-(\gamma)} w^r \biggr)^\frac{1}{r} \biggl(
\dashint_{R^+(\gamma)} v^{-q'} \biggr)^\frac{1}{q'} \\
& \qquad =
\biggl(  \frac{1}{\lvert R^-(\gamma) \rvert} \int_{R^-(\gamma) \cap \{w_1>w_2\}} w^r \biggr)^\frac{1}{r} \biggl( \dashint_{R^+(\gamma)} v^{-q'} \biggr)^\frac{1}{q'} \\
&\qquad \qquad +
\biggl(  \frac{1}{\lvert R^-(\gamma) \rvert} \int_{R^-(\gamma) \cap \{w_1 \leq w_2\}} w^r \biggr)^\frac{1}{r} \biggl( \dashint_{R^+(\gamma)} v^{-q'} \biggr)^\frac{1}{q'}
\\
&\qquad\leq
\biggl( \dashint_{R^-(\gamma)} w_1^r \biggr)^\frac{1}{r} \biggl( \dashint_{R^+(\gamma)} v^{-q'} \biggr)^\frac{1}{q'}
+
\biggl( \dashint_{R^-(\gamma)} w_2^r \biggr)^\frac{1}{r} \biggl( \dashint_{R^+(\gamma)} v^{-q'} \biggr)^\frac{1}{q'}
\\
&\qquad\leq
\biggl( \dashint_{R^-(\gamma)} w_1^r \biggr)^\frac{1}{r} \biggl( \dashint_{R^+(\gamma)} v_1^{-q'} \biggr)^\frac{1}{q'}
+
\biggl( \dashint_{R^-(\gamma)} w_2^r \biggr)^\frac{1}{r} \biggl( \dashint_{R^+(\gamma)} v_2^{-q'} \biggr)^\frac{1}{q'}
\\
&\qquad\leq
[(w_1,v_1)]_{A^+_{q,r}(\gamma)} + [(w_2,v_2)]_{A^+_{q,r}(\gamma)} .
\end{align*}
Now suppose that $q=1$; then
\begin{align*}
\biggl( \dashint_{R^-(\gamma)} w^r \biggr)^\frac{1}{r} &\leq \biggl(
\frac{1}{\lvert R^-(\gamma) \rvert} \int_{R^-(\gamma) \cap \{w_1>w_2\}} w^r \biggr)^\frac{1}{r} + \biggl( \frac{1}{\lvert R^-(\gamma) \rvert} \int_{R^-(\gamma) \cap \{w_1\leq w_2\}} w^r \biggr)^\frac{1}{r} \\
&\leq
\biggl( \dashint_{R^-(\gamma)} w_1^r \biggr)^\frac{1}{r} + \biggl( \dashint_{R^-(\gamma)} w_2^r \biggr)^\frac{1}{r} \\
&\leq
[(w_1,v_1)]_{A^+_{1,r}(\gamma)} \essinf_{(x,t)\in R^+(\gamma)} v_1(x,t) + [(w_2,v_2)]_{A^+_{1,r}(\gamma)} \essinf_{(x,t)\in R^+(\gamma)} v_2(x,t) \\
&\leq 
\bigl( [(w_1,v_1)]_{A^+_{1,r}(\gamma)} + [(w_2,v_2)]_{A^+_{1,r}(\gamma)} \bigr) \essinf_{(x,t)\in R^+(\gamma)} v(x,t) .
\end{align*}
If we take the  supremum over all parabolic rectangles $R\subset\mathbb{R}^{n+1}$, we prove the result for maxima.
The proof for  minima is essentially the same.
\end{proof}

\section{weak-type estimates}
\label{section:weak}

In this section we prove Theorems~\ref{thm:twoweightweaktype} and~\ref{thm:twoweightFS}.  For the convenience of the reader we restate both results here.

\begin{theorem*}(Theorem~\ref{thm:twoweightweaktype})
Let $0\leq\gamma<1$, $1\leq q<\infty$,
$0\leq\alpha<\frac{1}{q}$, $r=\frac{q}{1-\alpha q}$
and $(w,v) \in A^+_{q,r}(\gamma)$.
Then there is a constant $C=C(n,p,\gamma,q,r,\alpha)$ such that
\[
w^r( \{ M^{\gamma+}_{\alpha, c} f > \lambda \}) \leq [(w,v)]_{A_{q,r}^+(\gamma)}^r \frac{C}{\lambda^r} \biggl( \int_{\mathbb{R}^{n+1}} \lvert f \rvert^q \, v^q \biggr)^\frac{r}{q}
\]
for 
every $f\in L^1_{\mathrm{loc}}(\mathbb{R}^{n+1})$.

\end{theorem*}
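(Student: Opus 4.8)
The plan is to run a Vitali-type covering argument adapted to the parabolic geometry, in the spirit of Forzani--Mart\'in-Reyes--Ombrosi~\cite{ForzaniMartinreyesOmbrosi2011}, feeding H\"older's inequality and the $A^+_{q,r}(\gamma)$ condition into the rectangles produced by the covering.

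First I would reduce to a convenient class of functions. Since $M^{\gamma+}_{\alpha,c}$ is monotone in $|f|$ and commutes with increasing limits, replacing $f$ by $\min(|f|,k)\chi_{B(0,k)}$ and using a routine limiting argument (continuity from below of $w^r\,dx\dt$ together with monotone convergence on the right), it suffices to treat $f\geq 0$ bounded with compact support; in particular $f\in L^1(\mathbb{R}^{n+1})$. For such $f$ and $\lambda>0$, if $M^{\gamma+}_{\alpha,c}f(x,t)>\lambda$ then there is a parabolic rectangle $R=R_{(x,t)}$ with $z(R^-(\gamma))=(x,t)$ and $\int_{R^+(\gamma)}f>\lambda\lvert R^+(\gamma)\rvert^{1-\alpha}$; since $\int_{R^+(\gamma)}f\leq\lVert f\rVert_{1}$, the size of $R$ is bounded, so the level set $E_\lambda:=\{M^{\gamma+}_{\alpha,c}f>\lambda\}$ (which is open, by lower semicontinuity of $M^{\gamma+}_{\alpha,c}f$) is bounded; and since $[(w,v)]_{A^+_{q,r}(\gamma)}<\infty$ forces $w^r\in L^1_{\mathrm{loc}}$, by inner regularity it is enough to bound $w^r(K)$ for an arbitrary compact $K\subset E_\lambda$.

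The collection $\{R^-_{(x,t)}(\gamma):(x,t)\in K\}$ covers $K$, since each point is the center of the lower part of its rectangle. The key step is a parabolic covering lemma --- the analogue of the one in~\cite{ForzaniMartinreyesOmbrosi2011}, of the type also used in~\cite{kinnunenSaariParabolicWeighted,MaHeYan2023,KinnunenMyyryYangZhu2022} --- yielding a countable subfamily $\{R_j\}$ (possibly after replacing each $R_j$ by a bounded parabolic dilate) such that $K\subseteq\bigcup_j R^-_j(\gamma)$ and the upper parts have bounded overlap, $\sum_j\chi_{R^+_j(\gamma)}\leq C(n,p,\gamma)$. I expect this to be the main obstacle. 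Parabolic rectangles do not have bounded eccentricity (spatial scale $L$ versus time scale $L^p$), so a plain Besicovitch argument fails: one must order the rectangles by sidelength and exploit that two intersecting parabolic rectangles of very different sizes are nested up to a parabolic dilation, while simultaneously tracking the time lag $\gamma$, which separates $R^-(\gamma)$ from $R^+(\gamma)$ and is what makes the overlap of the upper parts finite.

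Granting the covering lemma, fix $j$ and write $m_j=\lvert R^-_j(\gamma)\rvert=\lvert R^+_j(\gamma)\rvert$. By H\"older's inequality with exponents $q,q'$,
\[
\lambda\, m_j^{1-\alpha}<\int_{R^+_j(\gamma)}f=\int_{R^+_j(\gamma)}(fv)\,v^{-1}\leq\Big(\int_{R^+_j(\gamma)}f^q v^q\Big)^{\frac1q}\Big(\int_{R^+_j(\gamma)}v^{-q'}\Big)^{\frac1{q'}},
\]
while the $A^+_{q,r}(\gamma)$ condition gives
\[
w^r(R^-_j(\gamma))=m_j\dashint_{R^-_j(\gamma)}w^r\leq m_j^{\,1+\frac r{q'}}\,[(w,v)]^r_{A^+_{q,r}(\gamma)}\Big(\int_{R^+_j(\gamma)}v^{-q'}\Big)^{-\frac r{q'}}.
\]
Raising the first estimate to the power $-r$ and substituting into the second, the powers of $m_j$ collect to the exponent $1+\tfrac r{q'}-r(1-\alpha)=1-r\big(\tfrac1q-\alpha\big)=1-\tfrac rr=0$ by the relation $\tfrac1q-\tfrac1r=\alpha$, which leaves
\[
w^r(R^-_j(\gamma))\leq[(w,v)]^r_{A^+_{q,r}(\gamma)}\,\lambda^{-r}\Big(\int_{R^+_j(\gamma)}f^q v^q\Big)^{\frac rq}.
\]
(When $q=1$ the same computation applies with $\big(\int_{R^+_j(\gamma)}v^{-q'}\big)^{1/q'}$ replaced by $(\essinf_{R^+_j(\gamma)}v)^{-1}$ and $f^qv^q$ by $fv$.) Finally, since $q\leq r$ we have $\tfrac rq\geq1$, so $\sum_j a_j^{r/q}\leq\big(\sum_j a_j\big)^{r/q}$ for $a_j\geq0$; combining this with subadditivity of $w^r$ and the bounded overlap of the upper parts,
\[
w^r(K)\leq\sum_j w^r(R^-_j(\gamma))\leq[(w,v)]^r_{A^+_{q,r}(\gamma)}\,\lambda^{-r}\Big(\sum_j\int_{R^+_j(\gamma)}f^q v^q\Big)^{\frac rq}\leq C\,[(w,v)]^r_{A^+_{q,r}(\gamma)}\,\lambda^{-r}\Big(\int_{\mathbb{R}^{n+1}}f^q v^q\Big)^{\frac rq},
\]
with $C=C(n,p,\gamma)^{r/q}$. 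Taking the supremum over compact $K\subset E_\lambda$ and removing the initial reduction on $f$ finishes the proof.
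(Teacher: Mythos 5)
Your outline matches the paper's strategy (a Forzani--Mart\'{\i}n-Reyes--Ombrosi type covering argument followed by H\"older and the $A^+_{q,r}(\gamma)$ condition), and the algebra in the second half --- the cancellation of the powers of $m_j$ via $\tfrac1q-\tfrac1r=\alpha$ and the use of $\sum_j a_j^{r/q}\leq(\sum_j a_j)^{r/q}$ --- is exactly right. But the covering lemma you posit, namely a subfamily with $K\subset\bigcup_j R_j^-(\gamma)$ \emph{and} pointwise bounded overlap $\sum_j\chi_{R_j^+(\gamma)}\leq C(n,p,\gamma)$, is not available, and you correctly flag it as the main obstacle without supplying the idea that overcomes it. The difficulty is that a Vitali-type selection based on the lower parts (discarding $R_z$ when its center lies in a previously chosen $R_j^-(\gamma)$) controls the overlap of the \emph{upper} parts only among rectangles of comparable sidelength; across scales one can have a nested sequence of selected rectangles of sidelengths $2^{-k}$ whose upper parts all contain a common point while their lower-part centers are pairwise unabsorbed, so the pointwise overlap of the $R_j^+(\gamma)$ is genuinely unbounded. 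What the paper proves instead is a Carleson-type packing condition: for each selected $R_i$, the smaller selected rectangles $j\in J_i$ whose upper parts meet $R_i^+(\gamma)$ satisfy $\sum_{j\in J_i}\lvert R_j^+(\gamma)\rvert\leq c\,\lvert R_i^+(\gamma)\rvert$ (split into the boundary-crossing family $J_i^1$ and the contained family $J_i^2$).

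Converting that packing condition into something usable requires a second idea that is absent from your proposal: the restriction to the annular level set $\{\lambda<M^{\gamma+}_{\alpha,\xi}f\leq2\lambda\}$ (your reduction works with the full superlevel set). The two-sided bound $\lambda\lvert R_j^+(\gamma)\rvert^{1-\alpha}<\int_{R_j^+(\gamma)}\lvert f\rvert\leq2\lambda\lvert R_j^+(\gamma)\rvert^{1-\alpha}$ turns the packing condition into $\sum_{j\in J_i}\bigl(\int_{R_j^+(\gamma)}\lvert f\rvert\bigr)^{1/(1-\alpha)}\leq c\bigl(\int_{R_i^+(\gamma)}\lvert f\rvert\bigr)^{1/(1-\alpha)}$, which is then used to show that the high-multiplicity set $G_i^{2c}\subset R_i^+(\gamma)$ carries at most half the mass of $\lvert f\rvert$ on $R_i^+(\gamma)$. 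One then replaces $R_i^+(\gamma)$ by $F_i=R_i^+(\gamma)\setminus G_i^{2c}$, which retains $\int_{F_i}\lvert f\rvert\geq\tfrac12\int_{R_i^+(\gamma)}\lvert f\rvert$ and \emph{does} have bounded overlap; your final chain of inequalities then goes through with $F_i$ in place of $R_j^+(\gamma)$ in the $f^qv^q$ integral (the $v^{-q'}$ integral is still taken over all of $R_i^+(\gamma)$, which only helps). Without this extraction step the last display of your argument, $\sum_j\int_{R_j^+(\gamma)}f^qv^q\leq C\int f^qv^q$, has no justification. The remaining reductions you make (compactly supported $f$, inner regularity, dilating the lower parts) are handled in the paper by truncating $w$ from below and a compactness argument, and are minor by comparison.
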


\begin{proof}
For this proof we can make a number of reductions. 
First,  we may assume that $f$ is  bounded and has compact support. 
Second, it suffices to prove this result  for the following restricted maximal function
\[
M^{\gamma+}_{\alpha,\xi} f(x,t) = \sup_{\substack{ z(R^-(\gamma))=(x,t) \\ l(R)\geq \xi } } 
\lvert R^+(\gamma) \rvert^\alpha
\dashint_{R^+(\gamma)} \lvert f \rvert, \quad \xi<1,
\]
and then let $\xi \to 0$ to obtain the conclusion.
Third, it suffices to prove that
\begin{align*}
w^r( \{ \lambda < M^{\gamma+}_{\alpha, \xi} f \leq 2\lambda \}) \leq \frac{C}{\lambda^r} \biggl( \int_{\mathbb{R}^{n+1}} \lvert f \rvert^q \, v^q \biggr)^\frac{r}{q} ,
\end{align*}
since by summing up we get
\begin{align*}
w^r( \{ M^{\gamma+}_{\alpha,\xi} f > \lambda \}) &\leq
\sum_{k=0}^\infty w^r( \{ 2^{k} \lambda < M^{\gamma+}_{\alpha,\xi} f \leq 2^{k+1} \lambda \}) \\
&\leq
\sum_{k=0}^\infty
\frac{C}{2^{kr} \lambda^r} \biggl( \int_{\mathbb{R}^{n+1}} \lvert f \rvert^q \, v^q \biggr)^\frac{r}{q}
\leq
\frac{2C}{\lambda^r} \biggl( \int_{\mathbb{R}^{n+1}} \lvert f \rvert^q \, v^q \biggr)^\frac{r}{q} .
\end{align*}
Fourth, it suffices to prove that for every $L>0$ we have
\begin{align*}
w^r( B(0,L) \cap \{ \lambda < M^{\gamma+}_{\alpha,\xi} f \leq 2\lambda \}) \leq \frac{C}{\lambda^r} \biggl( \int_{\mathbb{R}^{n+1}} \lvert f \rvert^q \, v^q \biggr)^\frac{r}{q} ,
\end{align*}
since we may let $L\to\infty$ to obtain desired result.
Denote
\[
E = B(0,L) \cap \{ \lambda < M^{\gamma+}_{\alpha,\xi} f \leq 2\lambda \} .
\]
Finally, by Lemma~\ref{lem:trunctation},
it suffices to prove this result for the truncated weights $\max\{w,a\}$, $a>0$.
In other words,  we may assume that $w$ is bounded below by $a$.

By the definition of the set $E$,
for every point $z \in E$ there is a parabolic rectangle $R_{z}$ such that 
$z$ is the center point of $R^-_{z}(\gamma)$, $l(R_{z})\geq \xi$, and
\[
\lambda < \lvert R^+_{z}(\gamma) \rvert^{\alpha} \dashint_{R^+_{z}(\gamma)} \lvert f \rvert \leq 2 \lambda .
\]
Since $f\in L^1(\mathbb{R}^{n+1})$, we have that
\[
\lvert R^+_{z}(\gamma) \rvert^{1-\alpha} < \frac{1}{\lambda} \int_{R^+_{z}(\gamma)} \lvert f \rvert
\leq \frac{1}{\lambda} \int_{\mathbb{R}^{n+1}} \lvert f \rvert
< \infty;
\]
thus, the sidelength of $R_{z}$ is bounded  above.
Therefore, $\bigcup_{z\in E} R_{z}$ is a bounded set, and hence, by the absolute continuity of the integral,
there exists $0<\varepsilon<1$ such that for every $z\in E$,
\begin{align*}
w^r((1+\varepsilon)R_z^-(\gamma) \setminus R_z^-(\gamma)) \leq a (1-\gamma)\xi^{n+p} \leq a \lvert R_z^-(\gamma) \rvert \leq w(R_z^-(\gamma)).
\end{align*}
This also implies that
\[
w((1+\varepsilon)R_z^-(\gamma)) \leq 2 w(R_z^-(\gamma)) .
\]
Since $E$ is compact, we can find a finite collection of balls $B(z_k, (1-\gamma) \xi^p \varepsilon /2)$ with $z_k = (x_k,t_k) \in E$ such that
\[
E \subset \bigcup_{k} B(z_k, (1-\gamma) \xi^p \varepsilon /2) .
\]

We choose $R_{z_k}$ that has a largest sidelength and denote it by $R_1$. Then we consider a second largest $R_{z_k}$. If $z_k$ is contained in $R^-_1(\gamma)$, we discard $R_{z_k}$. Otherwise, we select $R_{z_k}$ and denote it by $R_{2}$.
Suppose that $R_j$, $j=1,\dots,i-1$, have been chosen. Now let  $R_{z_k}$ be the next largest rectangle (which may not be unique) and select it if $z_k$ is not contained in any of the previously chosen $R_j^-(\gamma)$ and denote it by $R_{i}$.
Therefore, after a finite number of steps we have constructed a finite collection $\{R_{i}\}_i$.
By construction, $R_i^-(\gamma) \nsubseteq R_j^-(\gamma) $ for $i\neq j$.
Hence,
every $z_k$ is contained in some $R^-_{i}(\gamma)$ and we have
$ B(z_k, (1-\gamma) \xi^p \varepsilon /2) \subset (1+\varepsilon)R^-_{i}(\gamma) $.
This implies that
\[
E \subset \bigcup_{k} B(z_k, (1-\gamma) \xi^p \varepsilon /2)
\subset \bigcup_{i} (1+\varepsilon) R^-_{i}(\gamma) .
\]
Therefore, we obtain
\begin{equation}
\label{eq:superlevelset}
w(E) \leq \sum_i w((1+\varepsilon) R^-_{i}(\gamma)) \leq 2 \sum_i w(R_i^-(\gamma)) .
\end{equation}

Denote the center of $R_i^-(\gamma)$ by $z_i$.
By construction, we observe that for $l(R_i)\geq l(R_j)$ we have
\begin{align*}
\lvert z_i - z_j \rvert \geq \frac{1}{2}
\min\{  l(R_i), (1-\gamma) l(R_i)^p \} .
\end{align*}
Thus, we have that $\frac{1}{2} R_i^-(\gamma) \cap \frac{1}{2} R_j^-(\gamma) = \emptyset $.
Fix $z\in\mathbb{R}^{n+1}$ and
define
\[
I_z^k = \{i\in\mathbb{N}: 2^{-k-1} < l(R_i) \leq 2^{-k}, z\in R_i^-(\gamma) \}, \quad k\in\mathbb{Z}.
\]
We then have that
\[
\bigcup_{i \in I_z^k} 
R_i^-(\gamma)
\subset 
S_z^-
,
\]
where 
$S_z^-$ is a rectangle centered at $z$ with sidelengths $l_x(S_z^-) = 2^{-k+1}$ and $l_t(S_z^-) = (1-\gamma)2^{-kp+1} $.
Hence, we have a bounded overlap for $R_i^-(\gamma)$ that have approximately same sidelength. In particular, 
\begin{align*}
\sum_{\substack{ i \\ 2^{-k-1} < l(R_i) \leq 2^{-k}}} 
\chi_{R_i^-(\gamma)}(z) 
&= 
\sum_{i\in I_z^k}
\frac{2^{n+1}}{(1-\gamma) l(R_i)^{n+p} } \Bigl\lvert \frac{1}{2} R_i^-(\gamma) \Bigr\rvert \\
&\leq
\frac{2^{n+1} 2^{(k+1)(n+p)}}{1-\gamma} 
\Bigl\lvert 
\bigcup_{i\in I_z^k}
R_i^-(\gamma) \Bigr\rvert \\
&\leq
\frac{2^{n+1} 2^{(k+1)(n+p)}}{1-\gamma} 
\lvert S_z^- \rvert \\
&=
\frac{2^{n+1} 2^{(k+1)(n+p)}}{1-\gamma } 
2^{(-k+1)n} (1-\gamma) 2^{-kp+1} \\
&= 2^{3n+p+2} .
\end{align*}

Fix $i$ and define
\[
J_i = \{j\in\mathbb{N}: R_i^+(\gamma) \cap R_j^+(\gamma) \neq \emptyset, l(R_j) < l(R_i) \} .
\]
We now divide the sets $J_i$ into two subcollections:
\[
J_i^1 = \{j\in J_i : R_j^+(\gamma) \nsubseteq R_i^+(\gamma) \} 
\quad\text{and}\quad
J_i^2 = \{j\in J_i : R_j^+(\gamma) \subset R_i^+(\gamma) \}.
\]
We first consider $J_i^1$.
Let $2^{-k_0-1}<l(R_i)\leq 2^{-k_0}$ and $2^{-k-1}<l(R_j)\leq 2^{-k}$.
If $R_j^+(\gamma) \nsubseteq R_i^+(\gamma)$, then
$R_j^+(\gamma)$ must intersect the boundary of $R_i^+(\gamma)$,  and so we have that
$R_j \subset A_k$ where $A_k$ 
is a set around the boundary of $R_i^+(\gamma)$ that satisfies
\begin{multline*}
    \lvert A_k \rvert \leq 
2\bigl(l(R_i) + 2l(R_i)\bigr)^n 2^{-kp+2}
+ 2n \bigl( l(R_i) + 2l(R_i) \bigr)^{n-1}
\bigl( (1-\gamma) l(R_i)^p + 4 l(R_i)^{p}\bigr)
2^{-k+1} 
\\
\leq 2^{2n+3} \bigl( l(R_i)^n 2^{-kp} + n l(R_i)^{n-1+p} 2^{-k} \bigr).
\end{multline*}
Therefore,
\begin{align*}
\sum_{j\in J_i^1} \lvert R_j^+(\gamma) \lvert
&= \sum_{k=k_0}^\infty \sum_{\substack{j\in J_i^1 \\ 2^{-k-1}<l(R_j)\leq 2^{-k} }} \lvert R_j^+(\gamma) \lvert\\
& =
\sum_{k=k_0}^\infty \sum_{\substack{j\in J_i^1 \\ 2^{-k-1}<l(R_j)\leq 2^{-k} }}
\lvert R_j^-(\gamma) \lvert \\
&= \sum_{k=k_0}^\infty 
\int_{A_k}
\sum_{\substack{j\in J_i^1 \\ 2^{-k-1}<l(R_j)\leq 2^{-k} }}
\chi_{R_j^-(\gamma)}(z) \dz 
\leq 
2^{3n+p+2} 
\sum_{k=k_0}^\infty
\lvert A_k \rvert \\
&\leq 2^{5n+p+5} \biggl( l(R_i)^n \sum_{k=k_0}^\infty 2^{-kp } + n l(R_i)^{n-1+p} \sum_{k=k_0}^\infty 2^{-k } \biggr) \\
&\leq 2^{5n+p+5} \biggl( l(R_i)^n 2^{-k_0 p+1} + n l(R_i)^{n-1+p} 2^{-k_0+1} \biggr) \\
&\leq 2^{5n+p+6} \biggl( l(R_i)^n 2^p l(R_i)^p + n l(R_i)^{n-1+p} 2 l(R_i) \biggr) \\
&\leq
\frac{C_1}{1-\gamma} \lvert R_i^+(\gamma) \rvert ,
\end{align*}
where $C_1 = 2^{6n+2p+6}$.

We now consider $J_i^2$.
Let 
\[
\mathcal{F}_{k_0} = \{ j\in J_i^2: 2^{-k_0-1}<l(R_j)\leq 2^{-k_0} \}
\]
and
\[
\mathcal{F}_{k} = \Bigl\{ j\in J_i^2: 2^{-k-1}<l(R_j)\leq 2^{-k}, R_j^-(\gamma) \cap \bigcup_{l=k_0}^{k-1} \bigcup_{m \in \mathcal{F}_l } R_m^-(\gamma) = \emptyset \Bigr\},
\quad k>k_0.
\]
In other words, we can view the rectangles in $\mathcal{F}_k$ as those that 
have  sidelength approximately $2^{-k}$ and do not intersect the previously chosen rectangles.
Now define $\mathcal{F} = \bigcup_{k=k_0}^\infty \mathcal{F}_k$.
Then for every $m\in J_i^2 \setminus \mathcal{F} $, there is $j\in \mathcal{F}$ such that $R_j^-(\gamma) \cap R_m^-(\gamma) \neq \emptyset$ and $l(R_m)< l(R_j)$.
Moreover, by the construction, we have that $R_m^-(\gamma) \nsubseteq R_j^-(\gamma) $.
Thus, we have
\begin{align*}
\sum_{j\in J_i^2} \lvert R_j^+(\gamma) \rvert \leq \sum_{j\in \mathcal{F}} \biggl( \lvert R_j^+(\gamma) \rvert +  
\sum_{m\in\mathcal{G}_j} \lvert R_m^+(\gamma) \rvert
\biggr) ,
\end{align*}
where
\[
\mathcal{G}_j = \{ m \in J_i^2 \setminus \mathcal{F}: R_j^-(\gamma) \cap R_m^-(\gamma) \neq \emptyset, R_m^-(\gamma) \nsubseteq R_j^-(\gamma) , l(R_m) < l(R_j) \} .
\]
We can now use an argument similar to that for $J_i^1$ on the second sum above 
by using the fact that the $R_j^-(\gamma)$ with approximately the same sidelength have bounded overlap.
More precisely,
let $2^{-k_0-1}<l(R_j)\leq 2^{-k_0}$ and $2^{-k-1}<l(R_m)\leq 2^{-k}$.
If $R_m^-(\gamma) \nsubseteq R_j^-(\gamma)$, then
$R_m^-(\gamma)$ intersects the boundary of $R_j^-(\gamma)$ and we have
$R_m^-(\gamma) \subset A_k$ where $A_k$ 
is a set around the boundary of $R_j^-(\gamma)$ such that
\begin{align*}
\lvert A_k \rvert 
&\leq
2\bigl( l(R_j) 
+ 2l(R_j) \bigr)^n (1-\gamma) 2^{-kp+1} \\
& \qquad \quad +2n \bigl( l(R_j) + 2l(R_j) \bigr)^{n-1}
\bigl( (1-\gamma) l(R_j)^{p} + 2(1-\gamma) l(R_j)^{p} \bigr)
2^{-k+1} \\
&\leq 
(1-\gamma) 2^{2n+2} \bigl( l(R_j)^n 2^{-kp} + n l(R_j)^{n-1+p} 2^{-k} \bigr) .
\end{align*}
Thus, we have
\begin{align*}
\sum_{m\in \mathcal{G}_j} \lvert R_m^+(\gamma) \rvert
&= \sum_{k=k_0}^\infty \sum_{\substack{m\in \mathcal{G}_j \\ 2^{-k-1}<l(R_m)\leq 2^{-k} }} \lvert R_m^+(\gamma) \rvert \\
& = \sum_{k=k_0}^\infty \sum_{\substack{m\in \mathcal{G}_j \\ 2^{-k-1}<l(R_m)\leq 2^{-k} }} \lvert R_m^-(\gamma) \rvert \\
&= \sum_{k=k_0}^\infty \int_{A_k}
\sum_{\substack{m\in \mathcal{G}_j \\ 2^{-k-1}<l(R_m)\leq 2^{-k} }}
\chi_{R_m^-(\gamma)}(z) \dz 
\leq 
2^{3n+p+2} \sum_{k=k_0}^\infty \lvert A_k \rvert \\
&\leq 
2^{5n+p+4} (1-\gamma) \biggl( l(R_j)^n \sum_{k=k_0}^\infty 2^{-kp} + n l(R_j)^{n-1+p} \sum_{k=k_0}^\infty 2^{-k} 
\biggr) \\
&\leq 2^{5n+p+4} (1-\gamma) \biggl( l(R_j)^n 2^{-k_0 p +1} + n l(R_j)^{n-1+p} 2^{-k_0+1} \biggr) \\
&\leq 2^{5n+p+5} (1-\gamma) \biggl( l(R_j)^n 2^p l(R_j)^p + n l(R_j)^{n-1+p} 2 l(R_j) \biggr) \\
&\leq
\frac{C_1}{2} \lvert R_j^+(\gamma) \rvert .
\end{align*}
Note that if $R_j^+(\gamma) \subset R_i^+(\gamma)$, then  $R_j^-(\gamma) \subset R_i$.
By our previous estimate, the definitions of $\mathcal{F}$ and $\mathcal{F}_k$, the bounded overlap of $R_j^-(\gamma)$ in $\mathcal{F}_k$, we get that
\begin{align*}
\sum_{j\in J_i^2} \lvert R_j^+(\gamma) \rvert 
&\leq \sum_{j\in \mathcal{F}} \biggl( \lvert R_j^+(\gamma) \rvert +  
\sum_{m \in \mathcal{G}_j} \lvert R_m^+(\gamma) \rvert
\biggr) \\
& \leq
\sum_{j\in \mathcal{F}} \biggl( \lvert R_j^+(\gamma) \rvert +  
\frac{C_1}{2} \lvert R_j^+(\gamma) \rvert
\biggr) \\
&\leq C_1 \sum_{k=k_0}^\infty \sum_{j\in \mathcal{F}_k} \lvert R_j^-(\gamma) \rvert \\
& = C_1 \sum_{k=k_0}^\infty \int_{\bigcup_{j'\in \mathcal{F}_k} R_{j'}^-(\gamma)} \sum_{j\in \mathcal{F}_k} \chi_{R_j^-(\gamma)}(z) \dz \\
&\leq C_1 2^{3n+p+2} \sum_{k=k_0}^\infty 
\Bigl\lvert \bigcup_{j\in \mathcal{F}_k} R_{j}^-(\gamma) \Bigr\rvert \\
& = C_1 2^{3n+p+2} \Bigl\lvert \bigcup_{k=k_0}^\infty \bigcup_{j\in \mathcal{F}_k} R_{j}^-(\gamma) \Bigr\rvert \\
&\leq
C_1 2^{3n+p+2} \Bigl\lvert \bigcup_{j\in J_i^2} R_{j}^-(\gamma) \Bigr\rvert
\leq C_1 2^{3n+p+2} \lvert R_i \rvert \\
& = \frac{C_1 2^{3n+p+3}}{1-\gamma} \lvert R_i^+(\gamma) \rvert .
\end{align*}

If we now combine the estimates for $J_i^1$ and $J_i^2$, we get
\begin{align*}
\sum_{j\in J_i} \lvert R_j^+(\gamma) \rvert &= \sum_{j\in J_i^1} \lvert R_j^+(\gamma) \rvert + \sum_{j\in J_i^2} \lvert R_j^+(\gamma) \rvert 
\leq \frac{C_1}{1-\gamma} \lvert R_i^+(\gamma) \rvert + \frac{C_1 2^{3n+p+3}}{1-\gamma} \lvert R_i^+(\gamma) \rvert \\
&\leq \frac{2^{9n+3p+10} }{1-\gamma} \lvert R_i^+(\gamma) \rvert
\leq
2^{-\frac{1}{1-\alpha}} c
\lvert R_i^+(\gamma) \rvert ,
\end{align*}
where $c = \lceil 2^{9n+3p+11+\frac{1}{1-\alpha}} / (1-\gamma) \rceil$.
Thus, we have that
\begin{align*}
\sum_{j\in J_i} \biggl( \int_{R_j^+(\gamma)} \lvert f \rvert \biggr)^\frac{1}{1-\alpha} \leq (2\lambda)^\frac{1}{1-\alpha} \sum_{j\in J_i} \lvert R_j^+(\gamma) \rvert \leq c \lambda^\frac{1}{1-\alpha} \lvert R_i^+(\gamma) \rvert
\leq c \biggl( \int_{R_i^+(\gamma)} \lvert f \rvert \biggr)^\frac{1}{1-\alpha} .
\end{align*}

Extract a collection of subsets $F_i$ from $\{R_i^+(\gamma)\}_i$  that have bounded overlap.  
Fix $i$ and denote the number of indices in $J_i$ by $N$.
If $N\leq 2^\frac{1}{1-\alpha} c$, we can choose $F_i = R_i^+(\gamma)$.
Otherwise, if $N>2^\frac{1}{1-\alpha} c$, we define
\[
G_i^k = \{z\in R_i^+(\gamma): \sum_{j\in J_i} \chi_{R_j^+(\gamma)}(z) \geq k \}, \quad k\in\mathbb{N}.
\]
For $k_1 < k_2$, we have $G_i^{k_2} \subset G_i^{k_1} $.
Moreover, observe that 
\begin{align*}
\sum_{k=1}^N \chi_{G_i^k}(z) = \sum_{j\in J_i} \chi_{R_j^+(\gamma)}(z) .
\end{align*}
Therefore, we have that
\begin{align*}
N^{\frac{\alpha}{1-\alpha} } 2^\frac{1}{1-\alpha} c  \biggl( \int_{G_i^{2c}} \lvert f \rvert \biggr)^\frac{1}{1-\alpha} 
&\leq 
\biggl( N \int_{G_i^{2c}} \lvert f \rvert \biggr)^\frac{1}{1-\alpha}
\leq
\biggl( \sum_{k=1}^N \int_{G_i^k} \lvert f \rvert \biggr)^\frac{1}{1-\alpha}
= \biggl( \int_{R_i^+(\gamma)} \lvert f \rvert \sum_{k=1}^N \chi_{G_i^k} \biggr)^\frac{1}{1-\alpha} \\
&= \biggl( \int_{R_i^+(\gamma)} \lvert f \rvert \sum_{j\in J_i} \chi_{R_j^+(\gamma)} \biggr)^\frac{1}{1-\alpha}
\leq \biggl( \sum_{j\in J_i} \int_{R_j^+(\gamma)} \lvert f \rvert \biggr)^\frac{1}{1-\alpha} \\
&\leq N^{\frac{\alpha}{1-\alpha} } \sum_{j\in J_i} \biggl( \int_{R_j^+(\gamma)} \lvert f \rvert \biggr)^\frac{1}{1-\alpha}
\leq N^{\frac{\alpha}{1-\alpha} } c \biggl( \int_{R_i^+(\gamma)} \lvert f \rvert \biggr)^\frac{1}{1-\alpha} .
\end{align*}
This implies that
\[
\int_{G_i^{2c}} \lvert f \rvert 
\leq
\frac{1}{2} \int_{R_i^+(\gamma)} \lvert f \rvert .
\]

Define $F_i = R_i^+(\gamma) \setminus G_i^{2c}$; then we have that
\begin{align*}
\int_{F_i} \lvert f \rvert = \int_{R_i^+(\gamma)} \lvert f \rvert - \int_{G_i^{2c}} \lvert f \rvert \geq \frac{1}{2} \int_{R_i^+(\gamma)} \lvert f \rvert > \frac{\lambda}{2} \lvert R_i^+(\gamma) \rvert^{1-\alpha} .
\end{align*}
Thus, for every $F_i$ we have that
\begin{equation}
\label{eq:intavgFi}
\frac{2}{\lvert R_i^+(\gamma) \rvert^{1-\alpha}} \int_{F_i} \lvert f \rvert > \lambda .
\end{equation}
Moreover, for every $z\in F_{i}$ we have 
\begin{equation}
\label{eq:bddoverlapFi}
\sum_{j\in J_{i}} \chi_{R_j^+(\gamma)}(z) \leq 2^\frac{1}{1-\alpha} c .
\end{equation}

Now observe that the $R_i^+(\gamma)$ which have approximately the same sidelength  must also have bounded overlap. 
In particular, we can follow the argument for $R_i^-(\gamma)$ to show the following:
Fix $z\in\mathbb{R}^{n+1}$ and denote
\[
I_z^k = \{i\in\mathbb{N}: 2^{-k-1} < l(R_i) \leq 2^{-k}, z\in R_i^+(\gamma) \}, \quad k\in\mathbb{Z}.
\]
Then we have
\[
\bigcup_{i \in I_z^k} 
R_i^-(\gamma)
\subset 
S_z
,
\]
where 
$S_z$ is a rectangle centered at $z$ with sidelengths $l_x(S_z) = 2^{-k+1}$ and $l_t(S_z) = 2^{-kp+1} $.
Thus, we obtain
\begin{align*}
\sum_{\substack{ i \\ 2^{-k-1} < l(R_i) \leq 2^{-k}}} 
\chi_{R_i^+(\gamma)}(z) 
&= 
\sum_{i\in I_z^k}
\frac{2^{n+1}}{(1-\gamma) l(R_i)^{n+p} } \Bigl\lvert \frac{1}{2} R_i^-(\gamma) \Bigr\rvert 
\leq
\frac{2^{n+1} 2^{(k+1)(n+p)}}{1-\gamma } 
\Bigl\lvert 
\bigcup_{i\in I_z^k}
R_i^-(\gamma) \Bigr\rvert \\
&\leq
\frac{2^{n+1} 2^{(k+1)(n+p)}}{1-\gamma } 
\lvert S_z \rvert 
=
\frac{2^{n+1} 2^{(k+1)(n+p)}}{1-\gamma } 
2^{(-k+1)n} 2^{-kp+1}
= \frac{2^{3n+p+2}}{1-\gamma}  .
\end{align*}
We now claim that the $F_i$ have bounded overlap.
Fix $z\in \bigcup_i F_i$ and fix a rectangle $R_{i_0} \in \{R_i\}_i$ with largest sidelength such that $z\in F_{i_0}$.
Let $2^{-k_0-1}< l(R_{i_0}) \leq 2^{-k_0}$.
By the previous estimate and~\eqref{eq:bddoverlapFi}, we have
\begin{align*}
\sum_i \chi_{F_i}(z) 
&= \sum_{\substack{i \\ 2^{-k_0-1}< l(R_i) \leq 2^{-k_0} }} \chi_{F_i}(z) 
+ \sum_{\substack{i \\ l(R_i)\leq 2^{-k_0-1} }} \chi_{F_i}(z) \\
&\leq \sum_{\substack{i \\ 2^{-k_0-1}< l(R_i)\leq 2^{-k_0} }} \chi_{R_i^+(\gamma)}(z)
+ \sum_{i \in J_{i_0}} \chi_{R_i^+(\gamma)}(z) \\
&\leq 
\frac{2^{3n+p+2}}{1-\gamma} + 2^\frac{1}{1-\alpha} c \\
& \leq C_2 ,
\end{align*}
where $C_2 = 2^{9n+3p+13 +\frac{1}{1-\alpha} } /(1-\gamma) $.

We first suppose that  $1<q<\infty$.
By~\eqref{eq:superlevelset},~\eqref{eq:intavgFi}, H\"older's inequality, $\alpha=\frac{1}{q}-\frac{1}{r}$, and the bounded overlap of the $F_i$, we conclude that
\begin{align*}
w^r(E) &\leq 
2 \sum_i w^r(R_i^-(\gamma))
\leq \frac{2^{r+1}}{\lambda^r} \sum_i w^r(R_i^-(\gamma)) \biggl( \frac{1}{\lvert R_i^+(\gamma) \rvert^{1-\alpha} } \int_{F_i} \lvert f \rvert \biggr)^r \\
&\leq \frac{2^{r+1}}{\lambda^r} \sum_i  \frac{w^r(R_i^-(\gamma))}{\lvert R_i^+(\gamma) \rvert^{r(1-\alpha)} } \biggl( \int_{F_i} v^{-q'} \biggr)^{ \frac{r}{q'} } \biggl( \int_{F_i} \lvert f \rvert^q \, v^q \biggr)^\frac{r}{q} \\
&\leq \frac{2^{r+1}}{\lambda^r} \sum_i \dashint_{R_i^-(\gamma)} w^r  \biggl( \dashint_{R_i^+(\gamma)} v^{-q' } \biggr)^{ \frac{r}{q'} } \biggl( \int_{F_i} \lvert f \rvert^q \, v^q \biggr)^\frac{r}{q} \\
&\leq \frac{2^{r+1}}{\lambda^r} [(w,v)]_{A_{q,r}^+(\gamma)}^r \sum_i \biggl(\int_{F_i} \lvert f \rvert^q \, v^q \biggr)^\frac{r}{q} \\
&\leq \frac{2^{r+1}}{\lambda^r} [(w,v)]_{A_{q,r}^+(\gamma)}^r \biggl( \sum_i  \int_{F_i} \lvert f \rvert^q \, v^q \biggr)^\frac{r}{q} \\
&\leq \frac{2^{r+1} C_2^\frac{r}{q}}{\lambda^r} [(w,v)]_{A_{q,r}^+(\gamma)}^r \biggl(  \int_{\mathbb{R}^{n+1}} \lvert f \rvert^q \, v^q \biggr)^\frac{r}{q} .
\end{align*}

When $q=1$, we use~\eqref{eq:superlevelset},~\eqref{eq:intavgFi}, and the bounded overlap of $F_i$ to get
\begin{align*}
w^r(E) &\leq 
2 \sum_i w^r(R_i^-(\gamma))
\leq \frac{2^{r+1}}{\lambda^r} \sum_i w^r(R_i^-(\gamma)) \biggl( \frac{1}{\lvert R_i^+(\gamma) \rvert^{1-\alpha}} \int_{F_i} \lvert f \rvert \biggr)^r
\\
&=
\frac{2^{r+1}}{\lambda^r} \sum_i \dashint_{R_i^-(\gamma)} w^r
\biggl( \int_{F_i} \lvert f \rvert \biggr)^r
\leq \frac{2^{r+1}}{\lambda^r} [(w,v)]_{A_{1,r}^+(\gamma)}^r \sum_i \biggl( \int_{F_i} \lvert f \rvert \, v \biggr)^r 
\\
&\leq 
\frac{2^{r+1}}{\lambda^r} [(w,v)]_{A_{1,r}^+(\gamma)}^r \biggl( \sum_i  \int_{F_i}  \lvert f \rvert \, v \biggr)^r
\leq \frac{2^{r+1} C_2^r}{\lambda^r} [(w,v)]_{A_{1,r}^+(\gamma)}^r \biggl( \int_{\mathbb{R}^{n+1}} \lvert f \rvert \, v \biggr)^r .
\end{align*}
This completes the proof.
\end{proof}

\begin{theorem*}(Theorem~\ref{thm:twoweightFS})
Let $0\leq\gamma<1$, $1<q<\infty$
and let $w$ be a locally integrable weight.
Then there are constants $C_1 = C_1(n,p,\gamma)$ and $C_2=C_2(n,p,\gamma,q)$ such that
\[
w( \{ M_{c}^{\gamma+}f > \lambda \}) \leq \frac{C_1}{\lambda} \int_{\mathbb{R}^{n+1}} \lvert f \rvert \, M^{\gamma-}w\,dx\dt
\]
for 
every $f\in L^1_{\mathrm{loc}}(\mathbb{R}^{n+1})$.
Moreover, 
\[
\int_{\mathbb{R}^{n+1}} (M_c^{\gamma+}f)^q \, w \,dx\dt \leq C_2  \int_{\mathbb{R}^{n+1}} \lvert f \rvert^q \, M^{\gamma-}w\,dx\dt
\]
for 
every $f\in L^1_{\mathrm{loc}}(\mathbb{R}^{n+1})$.
\end{theorem*}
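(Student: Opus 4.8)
The plan is to deduce both inequalities directly from Theorem~\ref{thm:twoweightweaktype}. The key observation is that, for an arbitrary weight $w$, the pair built out of $w$ and $M^{\gamma-}w$ automatically belongs to the appropriate two-weight parabolic Muckenhoupt class with constant at most $1$; hence the weak-type $(1,1)$ assertion and, more generally, a whole scale of weak-type $(q,q)$ Fefferman--Stein estimates follow at once, and the strong-type bound is obtained by interpolating this scale. Throughout one may, after the routine reductions, take $f$ bounded with compact support and assume the right-hand sides are finite (the case $w\equiv 0$ is trivial), and one may truncate $w$ from below on large balls exactly as in the proof of Theorem~\ref{thm:twoweightweaktype}.

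The only structural ingredient is the self-improvement property of $M^{\gamma-}$: for every parabolic rectangle $R$ and a.e.\ $(y,s)\in R^+(\gamma)$,
\[
M^{\gamma-}w(y,s)\ \geq\ \dashint_{R^-(\gamma)} w ,
\]
simply because such an $R$ is admissible in the supremum defining $M^{\gamma-}w(y,s)$ whenever $(y,s)\in R^+(\gamma)$. Taking $r=1$ in Proposition~\ref{thm:maximalA1-cond} (or using this bound together with an essential infimum over $R^+(\gamma)$) shows $(w,M^{\gamma-}w)\in A^+_{1,1}(\gamma)$ with $[(w,M^{\gamma-}w)]_{A^+_{1,1}(\gamma)}\leq 1$, and Theorem~\ref{thm:twoweightweaktype} applied with $q=r=1$, $\alpha=0$ and $v=M^{\gamma-}w$ gives the first (weak-type) assertion, with $C_1=C(n,p,\gamma)$.

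Next, fix $1<q<\infty$ and apply the same idea to the pair $\bigl(w^{1/q},(M^{\gamma-}w)^{1/q}\bigr)$. Using the self-improvement bound on $R^+(\gamma)$,
\[
\biggl(\dashint_{R^-(\gamma)} w\biggr)^{1/q}\biggl(\dashint_{R^+(\gamma)} (M^{\gamma-}w)^{-q'/q}\biggr)^{1/q'}
\leq \biggl(\dashint_{R^-(\gamma)} w\biggr)^{1/q}\biggl(\dashint_{R^-(\gamma)} w\biggr)^{-1/q}=1 ,
\]
so $\bigl(w^{1/q},(M^{\gamma-}w)^{1/q}\bigr)\in A^+_{q,q}(\gamma)$ with constant at most $1$. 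Theorem~\ref{thm:twoweightweaktype} with $\alpha=0$ and $r=q$ then yields, for every $q\in[1,\infty)$, the weak-type Fefferman--Stein estimate
\[
w\bigl(\{M^{\gamma+}_c f>\lambda\}\bigr)\ \leq\ \frac{C(n,p,\gamma,q)}{\lambda^{q}}\int_{\mathbb{R}^{n+1}} \lvert f\rvert^q\, M^{\gamma-}w\,dx\dt .
\]
To finish, I would interpolate: given $q\in(1,\infty)$, choose $1\le q_0<q<q_1<\infty$; since $M^{\gamma+}_c$ is subadditive, the two displayed weak-type bounds say precisely that $M^{\gamma+}_c$ maps $L^{q_i}(M^{\gamma-}w\,dx\dt)$ into $L^{q_i,\infty}(w\,dx\dt)$ for $i=0,1$, where both the domain measure $M^{\gamma-}w\,dx\dt$ and the target measure $w\,dx\dt$ are fixed, independent of the endpoint. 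The Marcinkiewicz interpolation theorem (in the version allowing distinct measures on the domain and the target) then gives $M^{\gamma+}_c\colon L^{q}(M^{\gamma-}w\,dx\dt)\to L^{q}(w\,dx\dt)$ boundedly, which is the asserted strong-type inequality with $C_2=C_2(n,p,\gamma,q)$.

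I do not expect a real obstacle here: the weight of the argument is carried entirely by Theorem~\ref{thm:twoweightweaktype}, and what remains uses only the trivial lower bound for $M^{\gamma-}w$ and a standard change-of-measure Marcinkiewicz interpolation. The one mildly delicate point is bookkeeping on the sets where $M^{\gamma-}w$ vanishes or is infinite, which is absorbed by the reductions above; note also that it is essential that the \emph{uncentered} operator $M^{\gamma-}$ appears on the right, since the self-improvement inequality $M^{\gamma-}w\geq\dashint_{R^-(\gamma)}w$ on $R^+(\gamma)$ fails for the centered operator. This is consistent with the accompanying remark that the centered maximal operator on the right is an open problem.
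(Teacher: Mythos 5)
Your proposal is correct, and the weak-type part is exactly the paper's argument: Proposition~\ref{thm:maximalA1-cond} (equivalently, the trivial bound $M^{\gamma-}w\geq\dashint_{R^-(\gamma)}w$ on $R^+(\gamma)$) gives $(w,M^{\gamma-}w)\in A^+_{1,1}(\gamma)$ with constant $1$, and Theorem~\ref{thm:twoweightweaktype} does the rest. For the strong-type part you diverge slightly: the paper interpolates between the weak $(1,1)$ estimate and the trivial endpoint $\norm{M^{\gamma+}_c f}_{L^\infty(w)}\leq\norm{f}_{L^\infty(M^{\gamma-}w)}$, whereas you first establish the whole scale of weak $(q,q)$ Fefferman--Stein inequalities by checking that $\bigl(w^{1/q},(M^{\gamma-}w)^{1/q}\bigr)\in A^+_{q,q}(\gamma)$ with constant at most $1$ (a correct computation), and then apply Marcinkiewicz interpolation between two finite exponents $q_0<q<q_1$. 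Both interpolations are valid with distinct domain and target measures, since only the exponents are interpolated. The paper's choice is a bit leaner (one weak endpoint plus a one-line $L^\infty$ bound, yielding the explicit constant $q2^{q+1}C_1/(q-1)$); your choice costs an extra Muckenhoupt-class verification but produces, as a byproduct, the weak-type $(q,q)$ Fefferman--Stein inequalities for all $q$, which the paper does not state. No gap either way.
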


\begin{proof}
By Proposition~\ref{thm:maximalA1-cond}, we have $(w, M^{\gamma-}w)\in A_{1,1}^+(\gamma)$.
Hence,
Theorem~\ref{thm:twoweightweaktype} implies
the first inequality of the claim.
We note that
\[
\norm{M^{\gamma+}_c f}_{L^\infty(w)} \leq \norm{f}_{L^\infty(M^{\gamma-}w)} .
\]
Thus, $M^{\gamma+}_c f$ is bounded from $L^1(M^{\gamma-}w)$ to $L^{1,\infty}(w)$ and from $L^\infty(M^{\gamma-}w)$ to $L^{\infty}(w)$.
Therefore, by the Marcinkiewicz interpolation theorem, $M^{\gamma+}_c f$ is bounded from $L^q(M^{\gamma-}w)$ to $L^{q}(w)$: more precisely,
\[
\int_{\mathbb{R}^{n+1}} (M_c^{\gamma+}f)^q \, w \leq
\frac{q 2^{q+1} C_1  }{q-1}  
\int_{\mathbb{R}^{n+1}} \lvert f \rvert^q \, M^{\gamma-}w .
\]
\end{proof}

\section{strong-type estimates}
\label{section:strong}

In this section, we prove Theorems~\ref{thm:twoweightstrongbump} and~\ref{thm:twoweightstrongsawyer}.  Again, for the convenience of the reader we repeat the statement of both results.

\begin{theorem*} (Theorem~\ref{thm:twoweightstrongbump})
Let $0\leq\gamma<1$, $1<q,\,s<\infty$ and $w,v$ to be nonnegative measurable functions.
Assume that 
\[
[(w,v)]_{A^+_{q,q,s}(\gamma)} = 
\sup_{R \subset \mathbb R^{n+1}} \biggl( \dashint_{R^-(\gamma)} w  \biggr) \biggl( \dashint_{R^+(\gamma)} v^{\frac{s}{1-q}} \biggr)^\frac{q-1}{s} < \infty .
\]
Then there is a constant $C=C(n,p,\gamma,q,s)$ such that
\[
\int_{\mathbb{R}^{n+1}} (M_c^{\gamma+}f)^q \, w
\leq C [(w,v)]_{A_{q,q,s}^+(\gamma)} \int_{\mathbb{R}^{n+1}} \lvert f \rvert^q \, v 
\]
for every $f\in L^1_{\mathrm{loc}}(\mathbb{R}^{n+1})$.
\end{theorem*}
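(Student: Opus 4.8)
The plan is to obtain the strong $(q,q)$ bound by combining the weak-type inequality of Theorem~\ref{thm:twoweightweaktype} with the observation, essentially due to P\'erez, that a power bump at a single exponent self-improves to a one-parameter family of (unbumped) parabolic Muckenhoupt conditions at nearby exponents; interpolating between the resulting weak-type estimates then yields the claim. We may assume $f\geq 0$. Set
\[
q_0 = 1 + \frac{q-1}{s} \qquad\text{and}\qquad q_1 = q+1 ,
\]
so that $1 < q_0 < q < q_1 < \infty$, where the inequality $q_0 < q$ uses $s > 1$. The key step is the claim that for $i\in\{0,1\}$ the pair $\bigl(w^{1/q_i}, v^{1/q_i}\bigr)$ belongs to $A^+_{q_i,q_i}(\gamma)$, with $\bigl[\bigl(w^{1/q_i}, v^{1/q_i}\bigr)\bigr]_{A^+_{q_i,q_i}(\gamma)} \leq [(w,v)]_{A^+_{q,q,s}(\gamma)}^{1/q_i}$.

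To prove the claim, fix a parabolic rectangle $R$ and write $B = [(w,v)]_{A^+_{q,q,s}(\gamma)}$. Since $\tfrac{q_i'}{q_i} \leq \tfrac{s q'}{q}$ — for $i = 1$ because $q_1 > q$, and for $i = 0$ by the choice of $q_0$ (indeed with equality) — H\"older's inequality on $R^+(\gamma)$ gives
\[
\dashint_{R^+(\gamma)} v^{-q_i'/q_i} = \dashint_{R^+(\gamma)} \bigl(v^{s/(1-q)}\bigr)^{\frac{q\, q_i'}{q_i\, s\, q'}} \leq \biggl( \dashint_{R^+(\gamma)} v^{s/(1-q)} \biggr)^{\frac{q\, q_i'}{q_i\, s\, q'}} ,
\]
while the $A^+_{q,q,s}(\gamma)$ condition yields
\[
\dashint_{R^+(\gamma)} v^{s/(1-q)} \leq \biggl( \frac{B}{\dashint_{R^-(\gamma)} w} \biggr)^{\frac{s}{q-1}} .
\]
Combining these two bounds and using the identity $(q-1)q' = q$ to simplify the exponents, one finds $\bigl(\dashint_{R^+(\gamma)} v^{-q_i'/q_i}\bigr)^{1/q_i'} \leq \bigl(B/\dashint_{R^-(\gamma)} w\bigr)^{1/q_i}$, hence $\bigl(\dashint_{R^-(\gamma)} w\bigr)^{1/q_i}\bigl(\dashint_{R^+(\gamma)} v^{-q_i'/q_i}\bigr)^{1/q_i'} \leq B^{1/q_i}$; taking the supremum over all $R$ proves the claim.

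Now apply Theorem~\ref{thm:twoweightweaktype} with $\alpha = 0$, exponents $q = r = q_i$, and the weights $\bigl(w^{1/q_i}, v^{1/q_i}\bigr)$. Since $M_{0,c}^{\gamma+} = M_c^{\gamma+}$, this yields for $i = 0, 1$
\[
w\bigl( \{ M_c^{\gamma+} f > \lambda \} \bigr) \leq C\, B \, \lambda^{-q_i} \int_{\mathbb{R}^{n+1}} f^{q_i}\, v ,
\]
i.e. $M_c^{\gamma+}$ maps $L^{q_i}(v\,dx\dt)$ into $L^{q_i,\infty}(w\,dx\dt)$ with norm at most $(C B)^{1/q_i}$. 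Since $M_c^{\gamma+}$ is sublinear and $q_0 < q < q_1$, the Marcinkiewicz interpolation theorem for sublinear operators between two measure spaces gives
\[
\int_{\mathbb{R}^{n+1}} (M_c^{\gamma+} f)^q\, w \leq C(n,p,\gamma,q,s)\, [(w,v)]_{A^+_{q,q,s}(\gamma)} \int_{\mathbb{R}^{n+1}} f^q\, v ,
\]
because the factor $A_0^{\theta} A_1^{1-\theta}$ produced by interpolation, with $\tfrac1q = \tfrac{\theta}{q_0} + \tfrac{1-\theta}{q_1}$ and $A_i = (CB)^{1/q_i}$, collapses to $(CB)^{1/q}$; the dependence of the constant on $s$ enters only through $q_0$.

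The step I expect to require the most care is the uniform control of the unbumped constants $\bigl[\bigl(w^{1/q_i}, v^{1/q_i}\bigr)\bigr]_{A^+_{q_i,q_i}(\gamma)}$ by the single quantity $[(w,v)]_{A^+_{q,q,s}(\gamma)}$ — in particular getting the exponents to close up via $(q-1)q'=q$ — and it is precisely here that the strict bump $s > 1$ is essential, since that is what guarantees $\tfrac{q_i'}{q_i} \leq \tfrac{s q'}{q}$ for exponents $q_i$ on \emph{both} sides of $q$. An alternative and more hands-on route, closer to P\'erez's original Calder\'on--Zygmund argument, decomposes the level sets $\{ M_c^{\gamma+} f > 2^k \}$ using the parabolic covering built in the proof of Theorem~\ref{thm:twoweightweaktype}, peels off $|f|^q v$ on each selected rectangle by H\"older's inequality with the $L^s$ bump, absorbs the leftover into $[(w,v)]_{A^+_{q,q,s}(\gamma)}$, and sums over all scales against $\bigl( M^{\gamma+}\bigl( (f^q v)^{\delta} \bigr) \bigr)^{1/\delta}$ with $\delta = (sq')'/q < 1$, which is bounded on $L^1$ exactly because $M^{\gamma+}$ is bounded on $L^{1/\delta}$ with $1/\delta > 1$; there the main obstacle is the construction of pairwise disjoint subsets of the selected lower parts $R_i^-(\gamma)$ that are compatible across \emph{all} scales at once, i.e. a parabolic substitute for the classical Calder\'on--Zygmund stopping-time sets. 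In either route the restriction to the centered operator is inherited solely from Theorem~\ref{thm:twoweightweaktype}.
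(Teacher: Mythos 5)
Your proof is correct and follows essentially the same route as the paper: the heart of the argument in both cases is that the power bump at exponent $s$ yields the unbumped condition $(w^{1/t},v^{1/t})\in A^+_{t,t}(\gamma)$ at the lower exponent $t=q_0=1+\tfrac{q-1}{s}<q$ (your $i=0$ case, where your H\"older step degenerates to the paper's exact identity of constants), after which Theorem~\ref{thm:twoweightweaktype} and Marcinkiewicz interpolation finish the proof. The only difference is cosmetic: for the upper endpoint the paper uses the trivial bound $\norm{M^{\gamma+}_c f}_{L^\infty(w)}\leq\norm{f}_{L^\infty(v)}$, whereas you verify (correctly, by one more application of H\"older) a second weak-type estimate at $q_1=q+1$; both endpoints are legitimate and yield the same constant dependence.
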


\begin{proof}
Let $t$ be such that $t-1 = \frac{q-1}{s}$. Note that $1<t<q$. Then $[(w^\frac{1}{t},v^\frac{1}{t})]_{A_{t,t}^+(\gamma)}^t = [(w,v)]_{A_{q,q,s}^+(\gamma)}$ since
\[
\biggl( \dashint_{R^-(\gamma)} w \biggr) \biggl( \dashint_{R^+(\gamma)} v^{\frac{1}{1-t}} \biggr)^{t-1}
=
\biggl( \dashint_{R^-(\gamma)} w \biggr) \biggl( \dashint_{R^+(\gamma)} v^{\frac{s}{1-q}} \biggr)^\frac{q-1}{s}
\]
for every parabolic rectangle $R \subset \mathbb R^{n+1}$.
Thus, $(w^\frac{1}{t} ,v^\frac{1}{t} )\in A^+_{t,t}(\gamma)$; hence, by Theorem~\ref{thm:twoweightweaktype} we have that
\[
w( \{ M^{\gamma+}_c f > \lambda \}) 
\leq [(w^\frac{1}{t},v^\frac{1}{t})]_{A_{t,t}^+(\gamma)}^t \frac{C}{\lambda^t} \int_{\mathbb{R}^{n+1}} \lvert f \rvert^t \, v .
\]
Moreover, observe that
\[
\norm{M^{\gamma+}_c f}_{L^\infty(w)} \leq \norm{f}_{L^\infty(v)} .
\]
In other words, $M_c^{\gamma+}f$ is bounded from $L^t(v)$ to $L^{t,\infty}(w)$ and from $L^\infty(v)$ to $L^{\infty}(w)$.
Therefore, by the  Marcinkiewicz interpolation theorem, $M_c^{\gamma+}f$ is bounded from $L^q(v)$ to $L^{q}(w)$, i.e.,
\begin{align*}
\int_{\mathbb{R}^{n+1}} (M_c^{\gamma+}f)^q \, w &\leq
 \frac{ q}{q-t}  2^q  C [(w^\frac{1}{t},v^\frac{1}{t})]_{A_{t,t}^+(\gamma)}^t
\int_{\mathbb{R}^{n+1}} \lvert f \rvert^q \, v
\\
&=
\frac{q}{q-1} \frac{s}{s-1} 2^q  C [(w,v)]_{A_{q,q,s}^+(\gamma)}
\int_{\mathbb{R}^{n+1}} \lvert f \rvert^q \, v .
\end{align*}
\end{proof}

\begin{theorem*}(Theorem~\ref{thm:twoweightstrongsawyer})
Let $1<q\leq r<\infty$, $0\leq\alpha<1$
and $w,\,v$ to be weights.
Then 
\[
[(w,v)]_{S_{q,r,\alpha}^+} = 
\sup_R \biggl(  \int_{R^+} v^{1-q'} \biggr)^{-\frac{1}{q}}
\biggl( \int_{R} (M^{+}_{\alpha}(v^{1-q'} \chi_{R^+}))^r \, w \biggr)^\frac{1}{r} < \infty
\]
for every parabolic rectangle $R\subset\mathbb{R}^{n+1}$ if and and only if 
there is a constant $C=C(n,p,q,r)$ such that
\[
\biggl( \int_{\mathbb{R}^{n+1}} (M_{\alpha,c}^{+}f)^r \, w \biggr)^\frac{1}{r} 
\leq
C [(w,v)]_{S_{q,r,\alpha}^+} \biggl( \int_{\mathbb{R}^{n+1}} \lvert f \rvert^q \, v \biggr)^\frac{1}{q}
\]
for 
every $f\in L^1_{\mathrm{loc}}(\mathbb{R}^{n+1})$.
\end{theorem*}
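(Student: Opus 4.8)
The plan is to prove the two implications separately; necessity is routine, while sufficiency is the substantial part and is where the new parabolic dyadic machinery is needed.

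\emph{Necessity.} Assume the strong-type inequality holds for $M^+_{\alpha,c}$ with some finite constant. Testing it on the functions $f=v^{1-q'}\chi_{R^+}$ and using the identity $(v^{1-q'})^q v=v^{1-q'}$, the right-hand side becomes a fixed multiple of $(\int_{R^+}v^{1-q'})^{1/q}$, while the left-hand side dominates $(\int_R(M^+_{\alpha,c}(v^{1-q'}\chi_{R^+}))^r w)^{1/r}$. The only subtlety is to pass from the centered operator here to the uncentered operator $M^+_\alpha$ appearing in $S^+_{q,r,\alpha}$; since these two operators are genuinely inequivalent in the parabolic geometry, this requires a short covering argument, using that for $z\in R$ only rectangles $R'$ with $l(R')\lesssim l(R)$ contribute essentially to $M^+_\alpha(v^{1-q'}\chi_{R^+})(z)$ (larger scales lose a factor $\lesssim (l(R)/l(R'))^{(n+p)(1-\alpha)}$ since $\alpha<1$) and that each such $R'$ can be traded, after passing to a nearby point, for an admissible rectangle for $M^+_{\alpha,c}$. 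This yields $[(w,v)]_{S^+_{q,r,\alpha}}<\infty$.

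\emph{Sufficiency: reduction to a dyadic model.} Assume now $[(w,v)]_{S^+_{q,r,\alpha}}<\infty$ and write $\sigma=v^{1-q'}$. I would first construct a family of \emph{dyadic parabolic rectangles}: at each scale $2^{-m}$ a tiling of $\mathbb R^{n+1}$ by rectangles with spatial side $2^{-m}$ and temporal side $\sim 2^{-mp}$, nested across scales, in finitely many shifted copies $\mathcal D_1,\dots,\mathcal D_N$, $N=N(n,p)$, arranged so that each continuous parabolic rectangle $R$ is contained in some $\widehat R\in\bigcup_i\mathcal D_i$ with $l(\widehat R)\le C_0\,l(R)$, $R^+\subseteq\widehat R^+$ and $R^-\subseteq\widehat R^-$. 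Because $\alpha<1$, this gives the pointwise bound $M^+_{\alpha,c}f\lesssim\sum_{i=1}^N M^+_{\alpha,\mathcal D_i}f$, where $M^+_{\alpha,\mathcal D}g(z)=\sup\{|R^+|^\alpha\dashint_{R^+}|g|:R\in\mathcal D,\ z\in R^-\}$, so it suffices to bound each dyadic operator. The same comparison lemma shows that $[(w,v)]_{S^+_{q,r,\alpha}}<\infty$ forces the dyadic testing constant $\mathfrak S:=\sup_{R\in\mathcal D}\sigma(R^+)^{-1/q}\big(\int_R(M^+_{\alpha,\mathcal D}(\sigma\chi_{R^+}))^r w\big)^{1/r}$ to be finite (and $\lesssim[(w,v)]_{S^+_{q,r,\alpha}}$) for each $\mathcal D$, because on a dyadic $R$ every rectangle relevant to $M^+_{\alpha,\mathcal D}(\sigma\chi_{R^+})$ on $R$ is comparable to a continuous rectangle to which $S^+_{q,r,\alpha}$ applies.

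\emph{Sufficiency: the dyadic testing argument.} It remains to show the dyadic testing bound implies $\int(M^+_{\alpha,\mathcal D}f)^r w\lesssim\mathfrak S^r\big(\int|f|^q v\big)^{r/q}$. By density and homogeneity, assume $f\ge0$ bounded with compact support and substitute $f\mapsto f\sigma$, so the right-hand side becomes $\mathfrak S^r(\int f^q\sigma)^{r/q}$. Fix $a$ large (depending on $n,p,\alpha$) and let $\Omega_k=\{M^+_{\alpha,\mathcal D}(f\sigma)>a^k\}$; selecting dyadic stopping rectangles $\{R^k_j\}$ maximal with $|R^{k+}_j|^\alpha\dashint_{R^{k+}_j}f\sigma>a^k$, one gets $\Omega_k=\bigcup_j R^{k-}_j$ with the $R^{k-}_j$ pairwise disjoint for fixed $k$, the reverse bound $|R^{k+}_j|^\alpha\dashint_{R^{k+}_j}f\sigma\le b\,a^k$ from maximality, and hence pairwise disjoint sets $E^k_j:=R^{k-}_j\cap(\Omega_k\setminus\Omega_{k+1})$ with $\int(M^+_{\alpha,\mathcal D}(f\sigma))^r w\lesssim\sum_{k,j}a^{kr}w(E^k_j)$. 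Since $M^+_{\alpha,\mathcal D}(\sigma\chi_{R^{k+}_j})\ge|R^{k+}_j|^{\alpha-1}\sigma(R^{k+}_j)$ on $R^{k-}_j$, the testing bound yields $w(E^k_j)\le w(R^{k-}_j)\lesssim\mathfrak S^r|R^{k+}_j|^{r(1-\alpha)}\sigma(R^{k+}_j)^{-r/q'}$, and combining with $a^k<|R^{k+}_j|^{\alpha-1}\int_{R^{k+}_j}f\sigma$ gives $a^{kr}w(E^k_j)\lesssim\mathfrak S^r\sigma(R^{k+}_j)^{r/q}\big(\langle f\rangle^\sigma_{R^{k+}_j}\big)^r$, with $\langle f\rangle^\sigma_R=\sigma(R)^{-1}\int_R f\sigma$. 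To sum over $k,j$ I would pass to the \emph{principal} rectangles, the $\sigma$-sparse subfamily $\mathcal P$ on which the averages $\langle f\rangle^\sigma_{R^{k+}}$ roughly double: for $P\in\mathcal P$, the $R^k_j$ with principal ancestor $P$ have $\sigma$-averages comparable to $\langle f\rangle^\sigma_{P^+}$ and pairwise disjoint $E^k_j$'s inside $P$, so (using $r/q\ge1$) the sum over such $(k,j)$ is $\lesssim\sigma(P^+)^{r/q}(\langle f\rangle^\sigma_{P^+})^r\le(\int_{P^+}f^q\sigma)^{r/q}$ by Jensen's inequality; a final sum over the sparse family $\mathcal P$ gives $\sum_{k,j}a^{kr}w(E^k_j)\lesssim\mathfrak S^r(\int f^q\sigma)^{r/q}$, as desired.

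\emph{The main obstacle.} Once the framework is available, the stopping-time and principal-rectangle estimates are classical. The genuinely new, and technically hardest, part is the construction of the dyadic parabolic rectangles together with the covering (``$1/3$-trick'') lemma above: a parabolic rectangle does not admit a self-similar dyadic subdivision, because space scales like $L$ while time scales like $L^p$, so the hierarchy must be designed so that its nesting is compatible with the forward-in-time splitting $R=R^-\cup R^+$ — both the stopping construction (driven by averages over $R^+$) and the weight estimate (driven by $R^-$) must refer to one and the same family. It is exactly this compatibility that forces the restriction $\gamma=0$: a positive time lag inserts a gap between $R^-(\gamma)$ and $R^+(\gamma)$ that a single dyadic family cannot keep aligned, which is why the characterization for $\gamma>0$ remains open.
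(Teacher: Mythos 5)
Your overall architecture is the same as the paper's: necessity by testing on $v^{1-q'}\chi_{R^+}$, and sufficiency via a nested family of dyadic parabolic rectangles (spatial side $2^{-k}$, time length $\approx 2^{-kp}$), finitely many spatial and temporal translates of it playing the role of the three-lattice theorem, pointwise domination of $M^+_{\alpha,c}$ by the dyadic operators, and a Calder\'on--Zygmund stopping decomposition with the disjoint sets $E^k_j$. Where you genuinely diverge is the core dyadic estimate. The paper follows Sawyer's original scheme: it packages the stopping data into a linear operator $Tg(i,k)=\sigma(R^+_{i,k})^{-1}\int_{S^+_{i,k}}\lvert g\rvert\,\sigma$ on the discrete measure space $(\mathbb Z\times\mathbb Z,\mu)$ with $\mu(i,k)=(\lvert R^+_{i,k}\rvert^\alpha\sigma_{R^+_{i,k}})^r w(F_{i,k})$, proves $L^\infty(\sigma)\to\ell^\infty(\mu)$ and $L^1(\sigma)\to\ell^{r/q,\infty}(\mu)$ bounds (the testing condition enters only in the weak-type bound, applied once to each maximal rectangle $R_j$), and interpolates. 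You instead propose principal rectangles and a sparse summation. Both are legitimate routes to Sawyer-type theorems; the paper's route has the advantage that the testing condition is only ever applied to the maximal rectangles of the level sets of $Tg$, which sidesteps the bookkeeping below.

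As written, however, your summation step fails. You apply the testing condition to each stopping rectangle separately, obtaining $a^{kr}w(E^k_j)\lesssim\mathfrak S^r\sigma(R^{k+}_j)^{r/q}(\langle f\rangle^\sigma_{R^{k+}_j})^r$, and then need $\sum_{(k,j)}\sigma(R^{k+}_j)^{r/q}\lesssim\sigma(P^+)^{r/q}$ over all $(k,j)$ with principal ancestor $P$, which (via $r/q\ge1$) amounts to the packing condition $\sum_{(k,j)}\sigma(R^{k+}_j)\lesssim\sigma(P^+)$. This is false in general: take a nested chain of stopping rectangles $P=T_0\supset T_1\supset\dots\supset T_K$ with $\sigma$ essentially concentrated near the smallest one, and $f\sigma$ likewise concentrated there; then every $\langle f\rangle^\sigma_{T_m^+}$ is comparable to $\langle f\rangle^\sigma_{P^+}$ (so no new principal rectangle is ever triggered), each $T_m$ is a legitimate maximal stopping rectangle at its own level $k_m$ because $\lvert T_m^+\rvert^{\alpha-1}\int_{T_m^+}f\sigma$ increases geometrically along the chain, and yet $\sum_m\sigma(T_m^+)\approx K\,\sigma(P^+)$ with $K$ arbitrary. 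The disjointness of the $E^k_j$, which you mention but never use (it is discarded the moment you replace $w(E^k_j)$ by $w(R^{k-}_j)$), is exactly what rescues the argument: one must keep $w(E^k_j)$, write $(\lvert R^{k+}_j\rvert^{\alpha-1}\sigma(R^{k+}_j))^r w(E^k_j)\le\int_{E^k_j}(M^+_{\alpha,\mathcal D}(\sigma\chi_{P^+}))^r\,w$, sum over $(k,j)$ with principal ancestor $P$ using the disjointness of the $E^k_j$ inside $P$, and only then invoke the testing condition once, on $P$. This also requires $R^{k+}_j\subset P^+$ for the enlarged upper halves, which is not automatic because the dyadic upper halves have truncated time length; the paper's auxiliary rectangles $R^+_k\supset S^+_k$ of exact parabolic time length and the nesting property $R^+_k\subset R^+_{k-1}$ are built precisely to handle this, and your construction needs the same ingredient. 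With those repairs your principal-rectangle route goes through; your necessity paragraph is actually more careful than the paper's about the centered-versus-uncentered mismatch, which is a point worth keeping.
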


\begin{proof}
First, it is immediate that the norm inequality implies the testing condition $S_{q,r,\alpha}^+$, as it is just the norm inequality for the family of functions $v^{1-q'}\chi_{R^+}$.  

To prove sufficiency,  we must first develop the machinery of dyadic grids in the parabolic geometry.  We start with a dyadic grid in the spatial coordinates.  
In $\mathbb{R}^{n}$ let $\mathcal{D} = \bigcup_{k\in\mathbb{Z}} \mathcal{D}_k$, be an abstract dyadic grid in $\R^n$:  that is,
 each $\mathcal{D}_k$ consists of pairwise disjoint cubes $Q$ with sidelength $l(Q)=2^{-k}$ such that $\bigcup_{Q\in\mathcal{D}_k} Q = \mathbb{R}^n$.  Moreover, given cubes $Q,\,P \in \mathcal{D}$, $Q\cap P \in \{ Q, P, \emptyset\}$.  We will always assume that $k=0$ is such that $l(Q)=1$ for all $Q\in \mathcal{D}_0$. 
 
We now extend these cubes to parabolic rectangles in $\R^{n+1}$.
Divide the time coordinate $\mathbb{R}$ into pairwise disjoint intervals $I_0$ with sidelength $\frac{1}{2}$ such that they cover $\mathbb{R}$.
Define rectangles $S_0^- = Q \times I_0$ for every $Q\in\mathcal{D}_0$ and for every $I_0$. Then the $S_0^-$ are pairwise disjoint, cover $\mathbb{R}^{n+1}$, and are such that $l_x(S_0^-)=1$ and $l_t(S_0^-)=\tfrac{1}{2}$.
We denote the collection of $S_0^-$ by $\mathcal{R}_0$
and call it the dyadic parabolic lattice at level $0$.

We now construct dyadic parabolic lattices at every level $k\in\mathbb{Z}$.
We first consider $k\in\mathbb{Z}_+$.
To partition each $S^-_0\in \mathcal{R}_0$, we divide each spatial edge into $2$ intervals of equal length. (Equivalently, we divide the spatial base in $\mathcal{D}_0$ into its $2^n$ dyadic children.) 
If
\[
\frac{1}{2 \lfloor 2^{p} \rfloor}=\frac{l_t(S_{0}^-)}{\lfloor 2^{p} \rfloor} < \frac{1}{2^{p+1}} ,
\]
then we divide the time interval of $S^-_0$ into $\lfloor 2^{p} \rfloor$ intervals of equal length.
Otherwise, we divide the time interval of $S^-_0$ into $\lceil 2^{p} \rceil$ intervals of equal length.
In either case we get subrectangles $S^-_1$ of  $S^-_0$ whose spatial sidelength 
$l_x(S^-_1)=l_x(S^-_0)/2 = 2^{-1}$ and whose time length is either 
\[
l_t(S^-_1)=\frac{l_t(S^-_0)}{\lfloor 2^{p} \rfloor} 
=\frac{1}{2\lfloor 2^{p} \rfloor} 
\quad\text{or}\quad
l_t(S^-_1)=\frac{1}{2\lceil 2^{p} \rceil}.
\]
We subdivide $S^-_1$ in the same manner as above to obtain $S^-_2$.

We now repeat this process by induction.
At the $k$th step,  partition the
rectangles $S^-_{k-1}$ by dividing each spatial side into $2$  intervals of equal length.
If 
\begin{equation}
\label{JNproof_eq1}
\frac{l_t(S_{k-1}^-)}{\lfloor 2^{p} \rfloor} < \frac{1}{2^{kp+1}},
\end{equation}
then divide the time interval of $S^-_{k-1}$ into $\lfloor 2^{p} \rfloor$ intervals of equal length.
Otherwise, if
\begin{equation}
\label{JNproof_eq2}
\frac{l_t(S_{k-1}^-)}{\lfloor 2^{p} \rfloor} \geq \frac{1}{2^{kp+1}},
\end{equation}
divide the time interval of $S^-_{k-1}$ into $\lceil 2^{p} \rceil$ intervals of equal length.
Thus, we get pairwise disjoint subrectangles $S^-_k$
with spatial sidelength 
$l_x(S^-_k) = 2^{-k}$. For each $k\in\mathbb{Z}_+$ define the  dyadic parabolic lattice $\mathcal{R}_k$ at level $k$ to be the collection of all the $S^-_k$ we have constructed.

For all $k\geq 0$ and  $S_k^-\in \mathcal{R}_k$, we claim that for the time length we have that 
\begin{equation}
\label{eq:sidelengths}
\frac{1}{2^{kp+2}} \leq l_t(S^-_k) \leq \frac{1}{2^{kp+1}}.
\end{equation}
We prove this by induction.  If $k=0$, then for each $S_0^- \in \mathcal{R}_0$, $l_t(S_0^-) =\frac{1}{2}=\frac{1}{2^{0p+1}}\geq \frac{1}{2^{0p+2}}$.  
Now suppose that for some $k\in \mathbb{N}$, \eqref{eq:sidelengths} holds for all $S_{k-1}^- \in \mathcal{R}_{k-1}$.  Fix $S_k^- \in \mathcal{R}_k$ with parent $S_{k-1}^-$.  If~\eqref{JNproof_eq1} holds, then we have that
\[ l_t(S_k^-) = \frac{l_t(S_{k-1}^-)}{\lfloor 2^{p} \rfloor} < \frac{1}{2^{kp+1}}.  \]
On the other hand, by induction, 
\[ l_t(S_k^-) = \frac{l_t(S_{k-1}^-)}{\lfloor 2^{p} \rfloor} \geq \frac{1}{2^{(k-1)p+2}}\frac{1}{2^p} = \frac{1}{2^{kp+2}}. \]
If~\eqref{JNproof_eq2} holds, then we have that
\[ l_t(S_k^-) = \frac{l_t(S_{k-1}^-)}{\lceil 2^{p} \rceil} \geq \frac{\lfloor 2^{p} \rfloor}{2^{(k-1)p+1}}\frac{1}{\lceil 2^{p} \rceil}
\geq \frac{1}{2^{(k-1)p+2}} \geq \frac{1}{2^{kp+2}}. \]
And, again by induction, 
\[ l_t(S_k^-) = \frac{l_t(S_{k-1}^-)}{\lceil 2^{p} \rceil} \leq \frac{1}{2^{(k-1)p+1}}\frac{1}{2^p} = \frac{1}{2^{kp+1}}. \]
Thus, \eqref{eq:sidelengths} holds for all $k\geq 0$.

We now consider the case $k\in\mathbb{Z}_-$.  We proceed by essentially reversing the previous construction.  Take any cube $Q\in \mathcal{D}_{-1}$ with sidelength 2, and erect on it a parabolic rectangle $S_{-1}^k$ whose sidelength is either $\lfloor 2^p \rfloor/2$ or $\lceil 2^p \rceil/2$ in such a way that \eqref{eq:sidelengths} holds, and such that if the construction used above starting from \eqref{JNproof_eq1} and \eqref{JNproof_eq2}, is applied to it, then the resulting parabolic rectangles $S_0^{-}$ are all contained in $\mathcal{R}_0$.  Denote the union of all the parabolic rectangles constructed in this way by $\mathcal{R}_{-1}$.  Continue this construction inductively:  given $k \in \mathbb{N}$, given the parabolic rectangles in $\mathcal{R}_{-k+1}$, on each cube in $\mathcal{D}_{-k}$ construct a parabolic rectangle whose time length is either $\lfloor 2^p \rfloor$ or $\lceil 2^p \rceil$ times the time length of the rectangles in $\mathcal{R}{_{-k+1}}$, \eqref{eq:sidelengths} holds, and such that if the construction used above starting from \eqref{JNproof_eq1} and \eqref{JNproof_eq2}, is applied to it, then the resulting parabolic rectangles $S_{-k+1}^{-}$ are all contained in $\mathcal{R}_{-k+1}$. Denote these parabolic rectangles by $\mathcal{R}_{-k}$.  Since we can repeat this for all $k$, 
we have constructed the desired dyadic structure $\mathcal{R}=\bigcup_{k\in\mathbb{Z}} \mathcal{R}_k$ in the parabolic geometry.

For every $S^-\in\mathcal{R}_k$ there exists a unique dyadic parabolic rectangle $\widetilde{S}^-\in\mathcal{R}_{k-1}$ such that $S^- \subset \widetilde{S}^-$.  Denote by $S^+_k$ the rectangle with the same sidelengths as $S^-_k$ 
such that the bottom of $S^+_k$ coincides with the top of $S^-_k$.
Note that by our construction, the collection of $S^+_k$ also forms $\mathcal{R}_k$.
For every $S^+_k$ there exists a unique rectangle $R^+_k$ with spatial sidelength $l_x(R_k^+)=2^{-k}$ and time length $l_t(R_k^+)=2^{-kp}$ such that $R^+_k$ has the same bottom as $S^+_k$.
By \eqref{eq:sidelengths}, we have that
\begin{multline*}
l_t(R^+_k) - l_t(S^+_k) 
\leq \frac{1}{2^{kp}} - \frac{1}{2^{kp+2}} 
= \frac{3}{4} \frac{1}{2^{kp}} 
\leq \frac{1}{2^{kp}} 
\\
\leq \frac{1}{2^{(k-1)p+1}} 
= \frac{1}{2^{(k-1)p}} - \frac{1}{2^{(k-1)p+1}} 
\leq l_t(R^+_{k-1}) - l_t(S^+_{k-1}) .
\end{multline*}
It follows at once from this that
\begin{equation}
\label{eq:subset}
R^+_{k} \subset R^+_{k-1}
\end{equation}
for a fixed rectangle $S^+_{k-1}$ and for every subrectangle $S^+_{k} \subset S^+_{k-1}$.

\medskip

We will show that translates of these grids in both the spatial and time coordinates will yield a finite family of parabolic dyadic grids that satisfies the analog of the three lattice theorem.  
Let $R \subset\mathbb{R}^{n+1}$ be an arbitrary parabolic rectangle
with
$R^+ = Q\times I$.
By the three lattice theorem (see~\cite[Theorem~3.1]{lernernazarov2019} or~\cite[proof of Theorem~1.10]{MR3092729}) 
there exist $3^n$ dyadic lattices $\mathcal{D}^\iota$, $\iota=1,\dots,3^n$, such that
for any cube $Q\subset\mathbb{R}^n$ there is a dyadic cube
$Q'\in \mathcal{D}^\iota$ for some $\iota\in\{1,\dots,3^n\}$ 
with
$Q\subset Q'$ and $l(Q') \leq 3l(Q)$.
Denote $L=l(Q)$.
Let $Q_k$ be the unique dyadic cube for which 
$Q' \subset Q_k$ and $l(Q_k)=2^{-k} = 2^3 l(Q')$.
It holds that $2^3 L\leq 2^{-k} \leq 2^5 L$.
Consider corresponding dyadic parabolic rectangles $S^+_k$ which spatial projection is $Q_k$.
Then by \eqref{eq:sidelengths} 
we have that  the time length of $S^+_k$ satisfies
\[
2 L^p \leq 2^{3p-2} L^p \leq 2^{-kp-2} \leq l_t(S^+_k) < 2^{-kp-1} \leq 2^{5p-1} L^p .
\]
It follows from this that if we take $l_t(S^+_k) / (\frac{1}{2}L^p) \leq \lceil 2^{5p} \rceil$ time translations of our lattice $\mathcal{R}_k$, with steps of length approximately $L^p/2$, then there would exist a dyadic parabolic rectangle $S^+_k$
in one of these lattices such that 
$R^+\in S_k^+$ and such that $z(R^-)\in S^-_k$.

Therefore, we have shown that there exist $3^n \lceil 2^{5p} \rceil$ dyadic parabolic 
lattices
$\mathcal{R}^\iota$, $\iota=1,\dots, 3^n \lceil 2^{5p} \rceil$, that our translations (in space and time coordinates) of our constructed lattice $\mathcal{R}$,
such that
for 
any
parabolic rectangle $R$ there is $S^+_k\in \mathcal{R}^\iota$ for some $\iota\in \{1,\dots, 3^n \lceil 2^{5p} \rceil\}$ 
with
$R^+\subset S^+_k$, $z(R^-)\in S^-_k$, 
\[
2^3 l_x(R^+) \leq l_x(S^+_k) < 2^5 l_x(R^+)
\quad\text{and}\quad
2^{3p-2} l_t(R^+) \leq l_t(S^+_k) < 2^{5p-1} l_t(R^+) .
\]
Furthermore, we have
\begin{align*}
\biggl( \lvert R^+ \rvert^\alpha \dashint_{R^+} \lvert f \vert \biggr)^r \leq 2^{r(1-\alpha)(5n+5p-1)} \biggl( \lvert S^+_k \rvert^\alpha \dashint_{S^+_k} \lvert f \vert \biggr)^r
\end{align*}
with $z(R^-) \in S^-_k$.
It follows that
\begin{align*}
\bigl( M^{+}_{\alpha,c} f(x,t) \bigr)^r \leq C_1^r \sum_{\iota=1}^{3^n \lceil 2^{5p} \rceil } \bigl( M^{+}_{\alpha,d,\iota} f(x,t) \bigr)^r
\end{align*}
for every $(x,t)\in\mathbb{R}^{n+1}$,
where
\[
M_{\alpha,d,\iota}^{+}f(x,t) = \sup_{\substack{ S^-\ni(x,t) \\ S^-\in\mathcal{R}^\iota }} \lvert S^+ \rvert^\alpha \dashint_{S^+} \lvert f \rvert
\]
is the dyadic parabolic maximal function with respect to the 
lattice $\mathcal{R}^\iota$ 
and $C_1 = 2^{(1-\alpha)(5n+5p-1)}$.

Therefore, to complete the proof it will suffice to prove that
\[
\biggl(
\int_{\mathbb{R}^{n+1}} (M_{\alpha,d,\iota}^{+}f)^r \, w \biggr)^\frac{1}{r} 
\leq
C [(w,v)]_{S_{q,r,\alpha}^+} 
\biggl(
\int_{\mathbb{R}^{n+1}} \lvert f \rvert^q \, v \biggr)^\frac{1}{q}
\]
for an arbitrary dyadic parabolic lattice $\mathcal{R}^\iota$,
since then 
\begin{multline*}
\biggl( \int_{\mathbb{R}^{n+1}} (M_{\alpha,c}^{+}f)^r \, w \biggr)^\frac{1}{r}  \leq
\biggl( 
\int_{\mathbb{R}^{n+1}} C_1^r  \sum_{\iota=1}^{3^n \lceil 2^{5p} \rceil } ( M_{\alpha,d,\iota}^{+}f)^r \, w \biggr)^\frac{1}{r}  \\
\leq
C_1 \sum_{\iota=1}^{3^n \lceil 2^{5p} \rceil } 
\biggl( 
\int_{\mathbb{R}^{n+1}} ( M_{\alpha,d,\iota}^{+}f)^r \, w \biggr)^\frac{1}{r}  
\leq
C_1 3^n \lceil 2^{5p} \rceil C [(w,v)]_{S_{q,r,\alpha}^+} \biggl( \int_{\mathbb{R}^{n+1}} \lvert f \rvert^q \, v \biggr)^\frac{1}{q} .
\end{multline*}
Furthermore, by a standard approximation argument we may assume that $f$ is a bounded function of compact support.

Fix a dyadic parabolic lattice $\mathcal{R}^\iota$ and denote the corresponding dyadic parabolic maximal function by $M_{\alpha,d}^{+} = M_{\alpha,d,\iota}^{+}$.
Let $k\in\mathbb{Z}$.
Then, we can use the properties of the dyadic lattice to argue as in the classical Calder\'on--Zygmund decomposition, and show that there  exists a collection $\{S_{i,k}^-\}_i \subset \mathcal{R}^\iota$ of pairwise disjoint dyadic parabolic rectangles such that
\[
\{ M_{\alpha,d}^{+}f > 2^k \} = \bigcup_i S_{i,k}^-
\quad \text{and} \quad
\lvert S_{i,k}^+ \rvert^\alpha \dashint_{S_{i,k}^+} \lvert f \rvert > 2^k .
\]
Define
\[
F_{i,k} = S_{i,k}^- \cap \{ M_{\alpha,d}^{+}f \leq 2^{k+1} \};
\]
then $\{F_{i,k}\}_{i,k}$ is a collection of pairwise disjoint sets.
Let $\sigma = v^{1-q'}$.
By \eqref{eq:sidelengths}, we have that
\begin{align*}
\int_{\mathbb{R}^{n+1}} (M_{\alpha,d}^{+}f)^r w
&\leq
\sum_k 2^{(k+1)r} \int_{ \{ 2^k < M_{\alpha,d}^{+}f \leq 2^{k+1} \} } w \\
& =  2^r \sum_{i,k} 2^{kr} w(F_{i,k}) \\
&\leq 2^r \sum_{i,k} \biggl( \lvert S_{i,k}^+ \rvert^\alpha \dashint_{S_{i,k}^+} \lvert f \rvert \biggr)^r w(F_{i,k}) \\
&= 2^r \sum_{i,k} \biggl( \frac{1}{\sigma(R_{i,k}^+)} \int_{S_{i,k}^+} \lvert f \rvert \biggr)^r \Bigl( \lvert R_{i,k}^+ \rvert^\alpha \sigma_{R_{i,k}^+} \Bigr)^r
\frac{\lvert R_{i,k}^+ \rvert^{r(1-\alpha)} }{\lvert S_{i,k}^+ \rvert^{r(1-\alpha)} }
w(F_{i,k}) \\
&\leq 8^r \sum_{i,k} \biggl( \frac{1}{\sigma(R_{i,k}^+)} \int_{S_{i,k}^+} \lvert f \rvert \biggr)^r \Bigl( \lvert R_{i,k}^+ \rvert^\alpha \sigma_{R_{i,k}^+} \Bigr)^r
w(F_{i,k}) \\
&= 8^r \sum_{i,k} \bigl( T(f\sigma^{-1})(i,k) \bigr)^r \mu(i,k) ,
\end{align*}
where
\[
Tg(i,k) = \frac{1}{\sigma(R_{i,k}^+)} \int_{S_{i,k}^+} \lvert g \rvert \, \sigma
\quad\text{and}\quad
\mu(i,k) = \Bigl( \lvert R_{i,k}^+ \rvert^\alpha \sigma_{R_{i,k}^+} \Bigr)^r w(F_{i,k}),
\]
and $R_{i.k}^+$ is defined as in~\eqref{eq:subset}.  

The operator $T$ is a linear operator defined on functions over the measure space $(\mathbb{Z}\times\mathbb{Z}, \mu)$.
We immediately have that $T$ is bounded from $L^\infty(\sigma)$ to $l^\infty(\mu)$, since
\begin{align*}
\norm{Tg}_{l^\infty(\mu)} = 
\sup_{i,k} \frac{1}{\sigma(R_{i,k}^+)} \int_{S_{i,k}^+} \lvert g \rvert \, \sigma \leq \norm{g}_{L^\infty(\sigma)} .
\end{align*}
We now claim  that $T$ is bounded from $L^{1}(\sigma)$ to $l^{1,\infty}(\mu)$.
Let $\{S^+_j\}_j$ denote the maximal disjoint collection of dyadic parabolic rectangles in
\[
\Bigl\{ S_{i,k}^+: Tg(i,k) = \frac{1}{\sigma(R_{i,k}^+)} \int_{S_{i,k}^+} \lvert g \rvert \, \sigma > \lambda \Bigr\} .
\]
For every $j$, define
\[
I_j = \bigl\{ (i,k): Tg(i,k)>\lambda , S_{i,k}^+ \subset S_{j}^+ \bigr\} .
\]
We then have that
\begin{equation}
\label{eq:dyarect_div}
\{(i,k): Tg(i,k)>\lambda\} = \bigcup_j I_j.
\end{equation}
Note that $S_{i,k}^+ \subset S_{j}^+$ implies $R_{i,k}^+ \subset R_{j}^+$ by \eqref{eq:subset}.
For every $(i,k)\in I_j$ and $z\in F_{i,k}\subset S_{i,k}^- \subset R_{i,k}^-$, we have
\[
\lvert R_{i,k}^+ \rvert^\alpha
\dashint_{R_{i,k}^+} \sigma 
\leq 
M_{\alpha,d}^{+}(\sigma \chi_{R_{i,k}^+})(z)
\leq
M_{\alpha,d}^{+}(\sigma \chi_{R_{j}^+})(z) .
\]
Thus, we obtain
\begin{align*}
\sum_{(i,k) \in I_j} \mu(i,k) =
\sum_{(i,k) \in I_j} \int_{F_{i,k}} \lvert R_{i,k}^+ \rvert^{\alpha r} (\sigma_{R_{i,k}^+})^r \, w
\leq
\sum_{(i,k) \in I_j} \int_{F_{i,k}} (M_{\alpha,d}^{+}(\sigma \chi_{R_{j}^+}))^r \, w .
\end{align*}
Observe that for $(i,k)\in I_j$ we have $F_{i,k} \subset S_{i,k}^- \subset S_j \subset R_j$.
Therefore, by \eqref{eq:dyarect_div}, using that the $F_{i,k}$ are pairwise disjoint, and applying our hypothesis, we get
\begin{align*}
\sum_{\{(i,k): Tg(i,k)>\lambda\}} \mu(i,k) 
&= \sum_j \sum_{(i,k) \in I_j} \mu(i,k) \\
& \leq 
\sum_j \sum_{(i,k) \in I_j} \int_{F_{i,k}} (M_{\alpha,d}^{+}(\sigma \chi_{R_{j}^+}))^r \, w \\
&\leq
\sum_j \int_{R_j} (M_{\alpha,d}^{+}(\sigma \chi_{R_{j}^+}))^r \, w \\
& \leq 
[(w,v)]_{S_{q,r,\alpha}^+}^r \sum_j \sigma(R_{j}^+)^\frac{r}{q}
\\
&\leq 
[(w,v)]_{S_{q,r,\alpha}^+}^r \biggl( \sum_j \sigma(R_{j}^+)  \biggr)^\frac{r}{q} \\
& \leq
[(w,v)]_{S_{q,r,\alpha}^+}^r
\biggl( \sum_j \frac{1}{\lambda} \int_{S_{j}^+} \lvert g \rvert \, \sigma  \biggr)^\frac{r}{q}
\\
&\leq
[(w,v)]_{S_{q,r,\alpha}^+}^r
\biggl( \frac{1}{\lambda} \int_{\mathbb{R}^{n+1}} \lvert g \rvert \, \sigma  \biggr)^\frac{r}{q} .
\end{align*}
Hence, $T$ is bounded from $L^{1}(\sigma)$ to $l^{r/q,\infty}(\mu)$.
Therefore, by the  Marcinkiewicz interpolation theorem, we have  that $T$ is bounded from $L^q(\sigma)$ to $l^r(\mu)$, that is,
there exists a constant $C_2=C_2(q,r)$ such that
\begin{multline*}
\biggl(
\sum_{i,k} \bigl( T(f\sigma^{-1})(i,k) \bigr)^r \mu(i,k)
\biggr)^\frac{1}{r} \\
\leq
C_2 [(w,v)]_{S_{q,r,\alpha}^+}
\biggl( \int_{\mathbb{R}^{n+1}} \lvert f \sigma^{-1} \rvert^q \sigma \biggr)^\frac{1}{q}
=
C_2 [(w,v)]_{S_{q,r,\alpha}^+} \biggl( \int_{\mathbb{R}^{n+1}} \lvert f \rvert^q \, v \biggr)^\frac{1}{q};
\end{multline*}
thus,  we have shown that
\begin{equation*}
\biggl( \int_{\mathbb{R}^{n+1}} (M_{\alpha,d}^{+}f)^r \, w \biggr)^\frac{1}{r}
\leq
8 \biggl( \sum_{i,k} \bigl( T(f\sigma^{-1})(i,k) \bigr)^r \mu(i,k) \biggr)^\frac{1}{r}
= 8 C_2 [(w,v)]_{S_{q,r,\alpha}^+} \biggl( \int_{\mathbb{R}^{n+1}} \lvert f \rvert^q \, v \biggr)^\frac{1}{q} .
\end{equation*}
This completes the proof.
\end{proof}

\section*{Declarations}
\noindent\textbf{Conflict of interest} The authors have no relevant financial or non-financial interests to disclose.

\end{document}